\definecolor{myRed}{rgb}{0.9,0.,.2}
\definecolor{myBlue}{rgb}{0.,0.,.6}
\definecolor{myGreen}{rgb}{0.1,0.7,0.1}
\definecolor{myViolet}{rgb}{102,0,153}
\newcommand{\Z}{\mathbb{Z}}
\newcommand{\Q}{\mathbb{Q}}
\newcommand{\R}{\mathbb{R}}
\newcommand{\SSS}{\mathbb{S}}
\newcommand{\HH}{\mathbb{H}}
\newcommand{\OOO}{\mathcal{O}}
\renewcommand{\H}{\mathcal{H}}
\newcommand{\Isom}{\mathrm{Isom}}
\newcommand{\tG}{\widetilde{\Gamma}}
\newcommand{\OO}{\mathrm{O}}
\newcommand{\Cyc}{\mathrm{Cyc}}
\theoremstyle{plain}
\newtheorem{theorem}{\indent Theorem}[section]
\newtheorem{question}[theorem]{\indent Question}
\newtheorem{proposition}[theorem]{\indent Proposition}
\newtheorem{lemma}[theorem]{\indent Lemma}
\newtheorem{theoremA}{\indent Theorem}
\newtheorem{theoremB}{\indent Theorem}
\newtheorem{theoremC}{\indent Theorem}
\renewenvironment{proof}
	{\par\indent{\bf Proof.}} 
	{\hfill$\scriptstyle\blacksquare$}
  \def\section{\@startsection{section}{2}%
    {\z@}{.5\linespacing\@plus.7\linespacing}{.5em}%
    {\normalfont\bfseries\centering}}
\def\@secnumfont{\bfseries}
\theoremstyle{definition}
\newtheorem{definition}[theorem]{\indent Definition}
\theoremstyle{remark}
\title[Kleinian sphere packings, reflection groups, and arithmeticity]{Kleinian sphere packings, reflection groups, \\ and arithmeticity}
\author{Nikolay Bogachev}
\thanks{Bogachev was partially supported by the Russian Science Foundation, grant no. 22-41-02028.}
\address{The Kharkevich Institute for Information Transmission Problems, Moscow, Russia}
\address{Moscow Institute of Physics and Technology, Dolgoprudny, Russia}
\email{nvbogach@mail.ru}
\author{Alexander Kolpakov}
\thanks{Kolpakov was partially supported by the Swiss National Science Foundation, project no. PP00P2-202667 }
\address{Institut de Math\'ematiques, Universit\'e de Neuch\^atel, CH--2000 Neuch\^atel, Switzerland}
\email{kolpakov.alexander@gmail.com}
\author{Alex Kontorovich}
\thanks{Kontorovich was partially supported by NSF grant DMS-1802119, BSF grant 2020119, and the
2020-2021 Distinguished Visiting Professorship at the National Museum of Mathematics.}
\address{Department of Mathematics, Rutgers University, 110 Frelinghuysen Rd, Piscataway, NJ 08854, USA}
\email{alex.kontorovich@rutgers.edu}
\begin{document}

\begin{abstract}
    In this paper we study crystallographic sphere packings and Kleinian sphere packings, introduced first by Kontorovich and Nakamura in 2017 and then studied further by Kapovich and Kontorovich in 2021. In particular, we solve the problem of existence of crystallographic sphere packings in certain 
    higher dimensions posed by Kontorovich and Nakamura. In addition, we present a  geometric doubling procedure allowing to obtain sphere packings from some Coxeter polyhedra without isolated roots, and study ``properly integral'' packings (that is, ones which are integral but not superintegral). Our techniques rely extensively on computations with Lorentzian quadratic forms, their orthogonal groups, and associated higher--dimensional hyperbolic polyhedra.
\end{abstract}

\large
\maketitle

\section{Introduction}

\subsection{Sphere packings}
For $n \geqslant 2$, let $\HH^{n+1}$ be the $(n+1)$--di\-men\-sio\-nal hyperbolic space. Then the set of all ideal points of hyperbolic space, or its ideal boundary, is $\partial \HH^{n+1} \approx \SSS^{n}$ and can be identified topologically with $\overline{\R^n}:= \R^n \cup \{\infty\}$. One can consider different coordinate systems in $\SSS^n$, and thus some of the definitions below depend on this choice of coordinates.

\begin{definition}
By a \textit{sphere packing} (or just \textit{packing}) $\mathscr{P}$ of $\SSS^n$
we mean an infinite collection $\{B_\alpha\}$ of 
round balls  in $\SSS^n$ so that: 
\begin{itemize}
    \item[\bf (1)] The interiors of the balls are disjoint, and 
    \item[\bf (2)] The union of the balls is dense in $\SSS^n$.
\end{itemize}
\end{definition}

\begin{figure}
    \centering
    \includegraphics[width=3in]{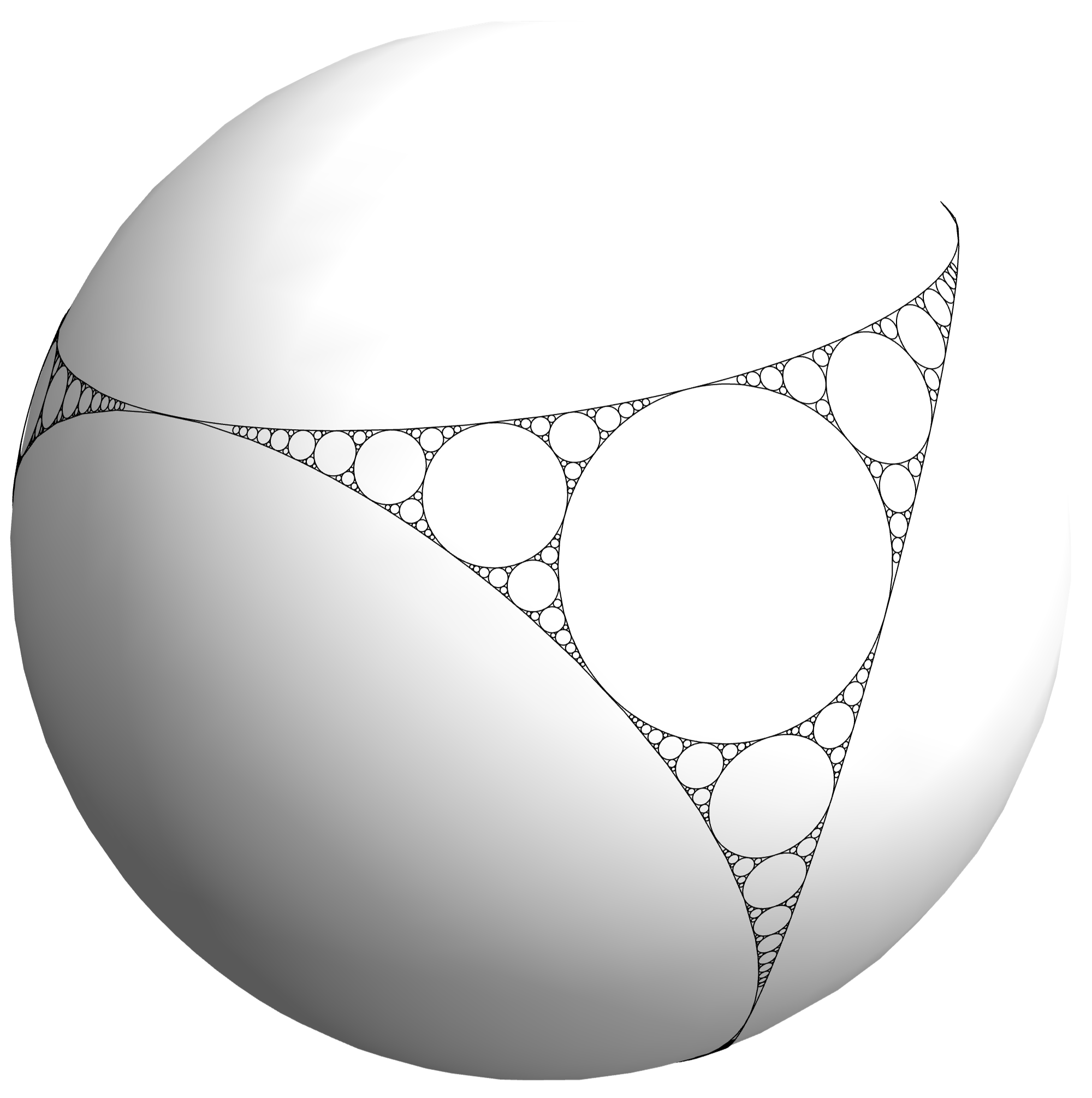}
    \caption{The Apollonian sphere packing. (Image by Iv\'{a}n Rasskin.) }
    \label{fig:Raskin}
\end{figure}
 
See Figure \ref{fig:Raskin} for an example.
We will often conflate balls in a packing with the round spheres bounding them.
Recall that, given a packing $\mathscr{P} = \{B_\alpha\}$ of $\SSS^n \approx \partial \HH^{n+1}$, every sphere $S_\alpha = \partial B_\alpha$ is also the boundary of a hyperplane $H_\alpha \cong \HH^n$ of $\HH^{n+1}$, i.e.,
$S_\alpha = \partial_{\infty} H_\alpha$. Any such hyperplane $H_\alpha$ is associated to a reflection $R_\alpha \in \Isom(\HH^{n+1})$, i.e., $H_\alpha$ is the mirror of the reflection $R_\alpha$. Let $\Gamma_R$ be the group generated by the reflections in these hyperplanes, $\Gamma_R:=\langle R_\alpha \mid B_\alpha \in \mathscr{P} \rangle.$

\begin{definition}
The \textit{superpacking} of $\mathscr{P}$ is its orbit under the $\Gamma_R$--action: 
$$
\widetilde{\mathscr{P}} = \Gamma_R \cdot \mathscr{P}.
$$
\end{definition}

\begin{definition}
 Choosing a point at infinity for the identification $\SSS^n=\R^n\cup\{\infty\}$ allows one to put a corresponding Euclidean metric on the sphere.
The {\it bend} of a ball $B_\alpha\in \mathscr{P}$ is the inverse of its signed radius in these coordinates. A radius is negative if the point at infinity is inside the ball.
\end{definition}

\begin{definition}
A packing is {\it integral} if there is a choice of such an identification $\SSS^n=\R^n\cup\{\infty\}$ that all balls $B_\alpha$ have integer bends. A packing is \textit{superintegral} if its superpacking has all integer bends.
We say that a packing is \textit{properly integral} if it is integral but not superintegral.
\end{definition}

\begin{definition}
A packing $\mathscr{P}$ is \textit{Kleinian} if the set of accumulation points of the spheres bounding its balls is the limit set of some geometrically finite discrete group
$\Gamma_S < \Isom(\HH^{n+1})$. We call such a $\Gamma_S$ {\it a symmetry group} of $\mathscr{P}$.
\end{definition}

\begin{definition}
Let $\mathscr{P} = \{B_\alpha\}$ be a Kleinian sphere packing with a symmetry group $\Gamma_S < \Isom(\HH^{n+1})$. Then \textit{a supergroup} of $\mathscr{P}$ is the group $\widetilde{\Gamma} = \Gamma_R \rtimes \Gamma_S$ generated by a symmetry group $\Gamma_S$ and all reflections $R_\alpha$.
\end{definition}
 
It is known that $\widetilde{\Gamma}$ is a lattice, that is, acts on $\HH^{n+1}$ with finite covolume; see the Structure Theorem (Theorem~\ref{theorem:structure-kleinian} below). Note that a symmetry group $\Gamma_S$ and supergroup $\widetilde \Gamma$ are not unique (indeed, compare Figure 2B and Figure 5A in \cite{KN}).

\begin{definition}
A packing is \textit{crystallographic} if it has a {\it reflective} symmetry group, that is, one generated by reflections in hyperplanes.
\end{definition}

\subsection{Outline and main theorems} The main focus of this paper is on determining the dimensions in which superintegral crystallographic sphere packings exist. It is worth mentioning that Kleinian sphere packings, even those that are superintegral, exist in all dimensions \cite{K^2}. 

The existence of a superintegral crystallographic packing in $\SSS^n$, $n\geqslant 2$, implies that there exists a reflective non--uniform arithmetic lattice of simplest type defined over $\mathbb{Q}$ that acts on $\HH^{n+1}$. (That it is reflective and arithmetic over $\mathbb Q$ follows directly from the Structure Theorem and Arithmeticity Theorem in \cite{KN}. That it is non--uniform is proved in \cite{K^2}.) A result of Esselmann \cite{E} implies that no such lattice is possible for $n\geqslant 21$. Previously, superintegral crystallographic sphere packings were known in dimensions $n\leqslant 13$ and $n=17$ by applying the techniques of \cite{KN}. We also remark that the existence of such a packing in dimension $n=18$ is claimed in \cite{KN}, although a further argument is needed  (given below in Lemma~\ref{lemma:doubling}) to substantiate this claim. 

The difficulty in producing a crystallographic sphere packing stems from the Structure Theorem of \cite{KN}. There it is shown that a finite covolume reflection group $\widetilde \Gamma$ gives rise to a sphere packing of $\SSS^n$ if and only if its Coxeter--Vinberg diagram (see \S\ref{sec:Coxeter}) has at least one ``isolated root''; by this we mean, that the corresponding facet  is orthogonal to all its neighbors. In terms of the Coxeter--Vinberg diagram, this means that the corresponding (isolated) vertex is connected to its neighbors only by dotted (common perpendiculars) or bold edges (parallel, i.e. tangent at the ideal boundary $\partial \mathbb{H}^{n+1}$).

Our main result about dimensions of superintegral crystallographic sphere packings is as follows. 

\begin{theoremA}\label{theorem:dimension-bound}
There are no superintegral crystallographic sphere packings in $\SSS^{n}$ for $n\geqslant 19$. 
\end{theoremA}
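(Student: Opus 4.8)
The plan is to translate the statement entirely into the language of hyperbolic reflection groups and then push the known dimension bound down by two using the classification of reflective Lorentzian lattices in the top ranks. First I would invoke the Structure Theorem of \cite{KN}: a crystallographic packing of $\SSS^n$ exists if and only if there is a finite-covolume reflection group $\widetilde{\Gamma}<\Isom(\HH^{n+1})$ whose Coxeter--Vinberg diagram carries an isolated root. Thus it suffices to prove that no such $\widetilde{\Gamma}$ can exist once $n+1\geq 20$, and the entire argument then takes place at the level of the diagram of $\widetilde{\Gamma}$.

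The second step is to control the arithmetic type and the field of definition of $\widetilde{\Gamma}$. A genuine packing must satisfy the density axiom, which forces the isolated ball to be tangent to translates of itself; geometrically this means the isolated root is joined to a neighbor by a bold edge, producing a parabolic isometry and hence a cusp, so that $\widetilde{\Gamma}$ is nonuniform exactly as in \cite{K^2}. Combining nonuniformity with the Arithmeticity Theorem of \cite{KN} I would conclude that $\widetilde{\Gamma}$ is arithmetic of simplest type over a totally real field $K$; and such a nonuniform group must in fact be defined over $K=\Q$, since an isotropic vector of the defining quadratic form would remain isotropic under every Galois embedding, contradicting the positive-definiteness of the conjugate forms at the non-identity real places. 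Consequently $\widetilde{\Gamma}$ is commensurable with the reflection subgroup of $\OO(L)$ for a reflective Lorentzian lattice $L$ of signature $(n+1,1)$, that is, of rank $n+2$.

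With the problem reduced to reflective Lorentzian lattices, I would finish using the classification in the top ranks. Esselmann's theorem \cite{E} already rules out rank $\geq 23$, which is the case $n\geq 21$. It remains to treat $n=19$ and $n=20$, i.e. ranks $21$ and $22$: here the reflective lattices form an explicit finite list (Esselmann, together with the work of Vinberg and Nikulin on maximal-rank reflective lattices), and for each such $L$ I would run Vinberg's algorithm to produce a fundamental polyhedron and inspect its Coxeter--Vinberg diagram, checking that no vertex is isolated, i.e. that every root makes an acute, solid-edge angle with at least one other root. Since the isolated-root condition fails for every lattice on the list, the Structure Theorem forbids a packing, and the range $n\geq 19$ is complete.

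The main obstacle I anticipate is twofold. First, the reduction to $K=\Q$ must genuinely cover packings that are integral but not superintegral (and a priori not even arithmetic), so the passage from the isolated-root/tangency data to arithmeticity and nonuniformity has to be made airtight rather than merely quoted for the superintegral case. Second, and more laborious, is the case analysis in ranks $21$ and $22$: one must be certain that the list of reflective lattices is complete and then carry the Vinberg algorithm far enough on each lattice to exclude isolated roots, which is where a careful and possibly computer-assisted verification becomes unavoidable.
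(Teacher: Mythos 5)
Your overall frame (reduce to nonuniform reflective Lorentzian lattices and invoke Esselmann) matches the paper's, and the case $n\geq 21$ is handled identically. But there is a genuine gap in your treatment of the remaining dimensions, most seriously at $n=20$. Your plan is to take the finite list of reflective lattices of signature $(21,1)$ --- in fact just Borcherds' commensurability class --- run Vinberg's algorithm, and check that the resulting Coxeter--Vinberg diagram has no isolated vertex. That check, even if it succeeds, does not rule out a packing: the Structure Theorem (Theorem~\ref{theorem:structure-kleinian}) only requires that \emph{some} finite-covolume reflection group in the commensurability class admit a cluster of isolated roots, and the paper's own doubling construction (Lemma~\ref{lemma:doubling}) shows that a polyhedron with no isolated root can have a finite-index reflection subgroup whose polyhedron does have one --- this is exactly how the new examples in dimensions $14$ and $18$ are produced. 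So inspecting the maximal polyhedron closes nothing. The paper's missing ingredient here is Lemma~\ref{lemma:stabilizer}: the stabilizer of a sphere in a crystallographic packing of $\SSS^{n}$ is itself a reflective lattice acting on $\HH^{n}$, arithmetic whenever the supergroup is. Applied to a hypothetical packing of $\SSS^{20}$, this produces a nonuniform arithmetic reflective lattice in $\HH^{20}$, which Esselmann excludes outright; the same exclusion in $\HH^{20}$ disposes of $n=19$ directly (your ``finite list'' in rank $21$ is in fact empty, so there is nothing to run Vinberg's algorithm on). This descent-by-one-dimension argument is the essential new idea of the proof and is absent from your proposal.

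A second, smaller issue is the one you yourself flag: your appeal to the Arithmeticity Theorem of \cite{KN} to put $\widetilde\Gamma$ over $\Q$ is only licensed for \emph{superintegral} packings; for a general crystallographic packing the supergroup is merely a nonuniform reflection lattice, and no classification is available in $\HH^{20}$ or $\HH^{21}$ beyond the arithmetic case. The paper's proof likewise operates in the superintegral/arithmetic setting (via Proposition~\ref{prop:lower-dimension} and Theorem~\ref{theorem:crystallographic}), so you are in the same position as the authors here, but you should not present the Arithmeticity Theorem as applying without that hypothesis.
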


The theorem above is implied by the results of Esselmann \cite{E}, together with the following fact. 

\begin{proposition}\label{prop:lower-dimension}
If a superintegral crystallographic sphere packing exists in $\SSS^{n}$, then there exists a $\Q$--arithmetic reflective lattice acting on $\HH^n$.
\end{proposition}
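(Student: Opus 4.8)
The plan is to descend from the supergroup acting on $\HH^{n+1}$ to a reflection lattice acting on the mirror of an isolated root, which is a copy of $\HH^n$. First I would invoke the facts recalled in the introduction: a superintegral crystallographic packing in $\SSS^n$ makes its supergroup $\widetilde\Gamma$ a reflective, non-uniform, arithmetic lattice of simplest type defined over $\Q$ acting on $\HH^{n+1}$, preserving an integral quadratic form $f$ of signature $(n+1,1)$. By the Structure Theorem of \cite{KN}, its Coxeter--Vinberg diagram carries an isolated root $e$; since $\widetilde\Gamma$ is arithmetic of simplest type over $\Q$, we may take $e$ to be a primitive integral vector, so that the mirror $H = e^{\perp}$ is a rational hyperplane, $H \cong \HH^n$, and the restriction $f_e := f|_{e^{\perp}}$ is a rational quadratic form of signature $(n,1)$.

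Next I would analyze the facet $F := P \cap H$ of the fundamental Coxeter polyhedron $P$ of $\widetilde\Gamma$. A reflection $R_w \in \widetilde\Gamma$ in a wall $H_w = e_w^{\perp}$ preserves $H$ if and only if $e_w \perp e$, and for such $w$ the restriction $R_w|_H$ is the reflection of $\HH^n$ in the hyperplane $H \cap H_w$. The isolated-root hypothesis is exactly what is needed here: because $e$ is joined to its neighbours only by dotted or bold edges, every wall of $P$ that actually crosses $H$ in the interior of $\HH^{n+1}$ is orthogonal to $H$, while the dotted (ultraparallel) and bold (parallel) neighbours contribute no interior facet. Consequently $F$ is a Coxeter polyhedron in $H \cong \HH^n$ with all dihedral angles inherited from $P$ (two walls orthogonal to $H$ meet $H$ at the angle at which they meet each other), and it is the fundamental domain of the Coxeter group $\Gamma_e$ generated by the reflections in its facets. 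Since a facet of a finite-volume polyhedron has finite volume, $F$ has finite volume, so by Poincar\'e's polyhedron theorem $\Gamma_e$ is a reflective lattice in $\Isom(\HH^n)$; as it preserves the rational form $f_e$, it is arithmetic of simplest type over $\Q$.

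It remains to establish non-uniformity, which I expect to be the main obstacle, since a facet of a cusped polyhedron need not itself be cusped. Here I would argue number-theoretically rather than geometrically: by Godement's compactness criterion, $\Gamma_e$ is non-uniform if and only if $f_e$ is isotropic over $\Q$. Now $f_e$ is indefinite of signature $(n,1)$, hence isotropic over $\R$, and it is a form in $n+1$ variables, so for $n \ge 4$ it has at least five variables and is therefore isotropic over every $p$-adic field $\Q_p$. By the Hasse--Minkowski theorem $f_e$ is then isotropic over $\Q$, so $\Gamma_e$ is non-uniform; this covers the entire range relevant to the dimension bound. For completeness one may also note the alternative geometric source of a cusp: whenever $e$ has a bold-edge neighbour $w$, the tangency point of the rational hyperplanes $H$ and $H_w$ is a rational isotropic vector lying in $e^{\perp}$, again forcing $f_e$ to be $\Q$-isotropic. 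In either case $\Gamma_e$ is the desired non-uniform arithmetic reflective lattice acting on $\HH^n$.
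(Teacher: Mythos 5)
Your proposal follows essentially the same route as the paper: the paper's Lemma~\ref{lemma:stabilizer} identifies the stabilizer of a sphere with the finite-covolume reflection group of the orthogonal facet $P_0 = P \cap H$ of the fundamental polyhedron, and deduces arithmeticity from that of the supergroup (the paper via Vinberg's criterion applied to the Gram matrix of the facet, you via the restricted rational form $f_e$ --- both standard and equivalent in effect). Where you genuinely add something is on non-uniformity: the paper's deduction of Proposition~\ref{prop:lower-dimension} from Lemma~\ref{lemma:stabilizer} is a one-line ``it is easy to see'' that never addresses why $\Gamma_0$ is non-uniform (a facet of a cusped polyhedron need not be cusped), whereas you supply the correct number-theoretic reason: $f_e$ is an indefinite rational form in $n+1$ variables, hence isotropic by Meyer/Hasse--Minkowski once $n \geq 4$. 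Do note that this leaves $n = 2, 3$ only conditionally covered (your bold-edge fallback requires a tangency to exist), so as a proof of the proposition for all $n$ there is a small residual gap --- one the paper's own argument shares --- but it is harmless for the paper's application, which only invokes the proposition in dimensions $n \geq 19$.
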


That is, not only is there a reflective lattice acting on $\HH^{n+1}$, namely, the supergroup, but there is moreover a reflective lattice in one dimension lower.

As for the existence of crystallographic sphere packings, we can summarise our findings together with the previous results of \cite{KN}, in order to formulate the following statement.

\begin{theoremB}\label{theorem:crystallographic}
Superintegral crystallographic sphere packings in $\SSS^n$ exist only for $n\leqslant 18$. Examples are known for all $2 \leqslant n \leqslant 14$, and $n = 17, 18$.
\end{theoremB}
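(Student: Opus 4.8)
The plan is to separate Theorem~\ref{theorem:crystallographic} into its two assertions---the sharp upper bound on the admissible dimensions, and the production of examples in the listed dimensions---and to treat them by entirely different means. The upper bound I would obtain for free from Theorem~\ref{theorem:dimension-bound}: a superintegral crystallographic packing is in particular a crystallographic packing, so the nonexistence of crystallographic packings of $\SSS^n$ for $n \geq 19$ forces any superintegral crystallographic packing to live in $\SSS^n$ with $n \leq 18$. Nothing further is needed here beyond invoking Theorem~\ref{theorem:dimension-bound}, which itself rests on Proposition~\ref{prop:lower-dimension} together with Esselmann's results \cite{E}.

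For the existence half I would proceed dimension by dimension, in each case exhibiting a finite-covolume reflection group $\widetilde{\Gamma} < \Isom(\HH^{n+1})$ that is nonuniform, arithmetic of simplest type defined over $\Q$, and whose Coxeter--Vinberg diagram carries at least one isolated root. Given such a $\widetilde{\Gamma}$, the Structure Theorem of \cite{KN} manufactures a crystallographic packing of $\SSS^n \approx \partial \HH^{n+1}$ out of the isolated root, and the Arithmeticity Theorem of \cite{KN} guarantees that the resulting superpacking has all integer bends, i.e.\ that the packing is superintegral. For $2 \leq n \leq 13$ and for $n = 17$ the required reflective arithmetic lattices (equivalently, the appropriate Coxeter polytopes possessing isolated roots) are already available from \cite{KN}, so these dimensions can be dispatched by citation.

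The two remaining dimensions, $n = 14$ and $n = 18$, are where the genuine work lies, and I expect $n = 18$ to be the main obstacle. In $\HH^{19}$ the natural candidate reflective arithmetic lattice has a Coxeter polyhedron \emph{without} isolated roots, so the Structure Theorem does not apply directly---this is precisely the gap in the claim of \cite{KN}. To close it I would invoke the geometric doubling procedure of Lemma~\ref{lemma:doubling}, which replaces such a polyhedron by a new reflective finite-covolume lattice whose diagram does contain an isolated root, thereby yielding the superintegral packing in $\SSS^{18}$. For $n = 14$ I would produce an explicit arithmetic reflective Coxeter polytope in $\HH^{15}$ (again of simplest type over $\Q$) admitting an isolated root, possibly also via the doubling construction, and then apply the Structure and Arithmeticity Theorems as above.

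The delicate points throughout the existence part are, first, verifying that the higher-dimensional candidate lattices are genuinely reflective and arithmetic over $\Q$ (a rarity in these dimensions, which is exactly why $n = 15, 16$ are left open); and second, checking in the doubling step that the construction really creates an isolated root while preserving both finite covolume and the integrality of the bends. The hard part will be managing the doubling argument for $n = 18$ so that the output satisfies all hypotheses of the Structure Theorem, since this is the one dimension where no directly available lattice has the needed isolated root.
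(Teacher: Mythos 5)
Your proposal follows essentially the same route as the paper: the upper bound is quoted from Theorem~\ref{theorem:dimension-bound}, the dimensions $2 \leq n \leq 13$ and $n=17$ are handled by known examples with an orthogonal (isolated) facet, and the new cases $n=14$ and $n=18$ are obtained by applying the doubling procedure of Lemma~\ref{lemma:doubling} to suitable arithmetic Coxeter polyhedra in $\HH^{15}$ and $\HH^{19}$. The only detail you leave unspecified---and which the paper supplies---is that these polyhedra are found as lower-dimensional faces of Borcherds' $21$-dimensional polyhedron (with arithmeticity of the $\HH^{15}$ example checked via Vinberg's criterion).
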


What is new here is the addition of dimensions $n=14$ and $n=18$. We construct these additional examples in higher dimensions by studying the lower--dimensional faces of the $21$--dimensional hyperbolic polyhedron discovered by Borcherds \cite{Bor}.  
 
The proof of Theorem~\ref{theorem:crystallographic} relies on the following fact that allows to combine Coxeter polyhedra with many ``even'' angles into one that has a facet orthogonal to all its neighbours. 

\begin{lemma}\label{lemma:doubling}
Let $P$ be a finite volume Coxeter polyhedron in $\HH^{n+1}$ and $P_0$ be a facet that meets other facets of $P$ at angles  $\pi/2$ or $\pi/4$. Let $P_1, \ldots P_k$ be all the facets of $P$ that meet $P_0$ at $\pi/4$.
Assume that each $P_j$ meets its neighbors at angles $\pi/2^m$, with $m \geqslant 1$, an integer.
Then the reflection group of $P$ is commensurable to the supergroup of a crystallographic packing.
\end{lemma}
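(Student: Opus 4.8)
The plan is to use the Structure Theorem of \cite{KN} in reverse: rather than read a packing off of $P$ directly, I would first replace $P$ by a commensurable finite-volume Coxeter polyhedron $Q \subset \HH^n$ whose Coxeter--Vinberg diagram has an isolated root, and then invoke the Structure Theorem to conclude that the reflection group $\Gamma_Q$ is the supergroup of a crystallographic packing of $\SSS^{n-1}$. Since commensurability is transitive, it then suffices to keep $\Gamma_Q$ commensurable to the reflection group $\Gamma_P$ of $P$ at every stage of the construction. The only obstruction to $P_0$ itself being an isolated root is the presence of the facets $P_1,\dots,P_k$ meeting it at the acute angle $\pi/4$; the point of the argument is to remove these by reflecting.

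The tool is the classical doubling construction. If $F$ is a facet of a finite-volume Coxeter polyhedron $\Pi$ such that every facet meeting $F$ does so at an angle of the form $\pi/(2\ell)$, $\ell \in \N$, then $D_F\Pi := \Pi \cup R_F(\Pi)$ is again a Coxeter polyhedron, whose reflection group is an index-$2$ subgroup of that of $\Pi$, hence commensurable. Under this operation a facet meeting $F$ at $\pi/2$ is glued to its mirror image into a single facet, whereas a facet meeting $F$ at $\pi/(2\ell)$ with $\ell \ge 2$ becomes, together with its image, two facets meeting at the valid Coxeter angle $\pi/\ell$. The key arithmetic observation is that $\pi/2^m = \pi/(2\cdot 2^{m-1})$, so the hypothesis that each $P_j$ meets its neighbours only at angles $\pi/2^m$ with $m\ge 1$ guarantees both that one may legally double across $P_j$ and that doubling sends $\pi/2^m \mapsto \pi/2^{m-1}$, keeping all relevant dihedral angles inside the set $\{\pi/2^m : m \ge 0\}$ (with $m=0$ encoding a merge). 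In particular this set is closed under doubling, so the operation can be iterated.

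Carrying this out, I would double $P$ successively across $P_1,\dots,P_k$, and across the reflected copies produced along the way as needed. Doubling across $P_j$ sends $P_j$ to the interior and converts the $\pi/4$ angle between $P_0$ and $P_j$ into a right angle between $P_0$ and $R_{P_j}(P_0)$; by the previous paragraph every remaining angle along $P_0$ stays in $\{\pi/2,\pi/4\}$, so no new acute angle is ever created on $P_0$. After the $k$-th doubling the distinguished facet $P_0$ meets each of its neighbours at $\pi/2$, i.e. it is an isolated root of the resulting Coxeter polyhedron $Q$, whose reflection group is commensurable to $\Gamma_P$; the Structure Theorem then completes the proof. The main obstacle lies in the bookkeeping of this last step: one must verify that each successive doubling remains legal and that reflecting and merging facets never produces an acute angle along $P_0$. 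This is exactly where the closure of $\{\pi/2^m\}$ under doubling is used, together with a careful analysis of the codimension-$\ge 3$ faces where several of the $P_j$ meet simultaneously, since there the reflected copies of $P_0$ can interact.
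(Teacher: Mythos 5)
Your proposal is correct and follows essentially the same route as the paper: iterated doubling along the facets meeting $P_0$ at $\pi/4$, using the hypothesis that each $P_j$ meets its neighbours at angles $\pi/2^m$ to guarantee that every double is again a Coxeter polyhedron (with the set of even angles closed under doubling), so that after $k$ steps the distinguished facet is orthogonal to all its neighbours and the Structure Theorem applies, commensurability coming from the index-$2$ subgroup at each doubling. The only cosmetic difference is that the paper tracks the reflected copy $R_{F_k}\cdots R_{F_1}(P_0)$ and doubles along the images $R_{F_{j-1}}(P_j)$, whereas you track $P_0$ itself; your explicit attention to the closure of $\{\pi/2^m\}$ under doubling and to the codimension-$\ge 3$ faces is, if anything, more careful than the paper's write-up.
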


Dimensions $n = 15$ and $16$ are still missing from our consideration: the gap seems non--trivial to fill, as at present, there are rather few candidate arithmetic lattices in high dimensions to explore for packings. For example, we studied another polyhedron discovered by Borcherds \cite{Borcherds2000} in $\HH^{17}$ (with $960$ facets), only to discover that it has no isolated roots, and no Coxeter facets either. Thus the present methods of obtaining sphere packings from polyhedra (Lemma~\ref{lemma:doubling} and the work in \cite{KN}) cannot be applied to the polyhedron itself.  We would like to thank Daniel Allcock for his invaluable help in our efforts to transfer this example into a computer--readable format. 

\begin{question}\label{q:exist}
Do there exist crystallographic (not necessarily integral or superintegral) sphere packings in $\SSS^n$ for $n = 15$ and $16$?
\end{question}

We should stress the fact that answering the above question in the negative even in the more narrow superintegral setting will require, at minimum, a complete classification of reflective quadratic Lorentzian forms over $\mathbb{Q}$. There is ongoing work in progress of Kirschmer and Schalau \cite{KS} that aims to complete this classification. They have kindly shared their data, and none of the lattices in $\HH^{16}$ and $\HH^{17}$ produce packings via our methods (such as Lemma \ref{lemma:doubling}, though we cannot rule out the possibility of some hypothetical reflective subgroup of such which supports a packing). As a result, it seems that the answer to Question \ref{q:exist} may be negative, in which case Theorem \ref{theorem:crystallographic} is sharp.

Another interesting direction to pursue is the possible connection between arithmetic reflection groups and sphere packings. Namely, Kontorovich and Nakamura \cite{KN} asked the following question:

\begin{question}\label{question}
Is it true that every non-cocompact arithmetic reflection group in $\H^{n+1}$ is commensurable to the supergroup of a superintegral crystallographic packing of $S^n$?
\end{question}
 
The answer turns out to be affirmative in low dimensions: here we confirm it for $n = 2, 3, 4$ by using the classification results of Scharlau, Walhorn, and Turkalj. 

\begin{theoremC}\label{theorem:arithmetic-super}
Any  $\Q$--arithmetic non--uniform   reflective lattice $\Gamma < \mathrm{Isom}\,\mathbb{H}^{n+1}$, $n = 2,3,4$,  is commensurable to the supergroup of a superintegral crystallographic packing in $\SSS^{n}$. 
\end{theoremC}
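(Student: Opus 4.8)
The plan is to reduce the statement to a finite, explicit verification built on the existing classifications of reflective Lorentzian forms. Since the conclusion only asserts commensurability, we are free to replace $\Gamma$ by any group in its commensurability class, and in particular by the maximal reflective lattice, whose fundamental Coxeter polyhedron $P \subset \HH^{n+1}$ is recorded, together with its Coxeter--Vinberg diagram, in the classifications of Scharlau--Walhorn and Turkalj of reflective $\mathbb{Q}$--forms in the relevant signatures $(3,1)$, $(4,1)$, and $(5,1)$ (that is, $n = 2, 3, 4$). Each of these classifications produces only finitely many maximal groups up to commensurability, so it suffices to treat one polyhedron $P$ per commensurability class and to exhibit, for each, a commensurable crystallographic packing.

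For a fixed such $P$ I would argue by the following dichotomy. First recall that arithmeticity of simplest type over $\mathbb{Q}$ constrains the dihedral angles: after scaling the roots to be rational, the ratio $\langle e_i, e_j\rangle^2/(\langle e_i,e_i\rangle\,\langle e_j,e_j\rangle) = \cos^2(\pi/m)$ lies in $\mathbb{Q}$, and by Niven's theorem this forces $m \in \{2,3,4,6\}$ (apart from the parallel and ultraparallel cases). In particular the angle $\pi/2^m$ appearing in Lemma~\ref{lemma:doubling} can only be $\pi/2$ or $\pi/4$ here. Now, if the diagram of $P$ carries an isolated root --- a vertex joined to its neighbours only by dotted or bold edges --- then by the Structure Theorem of \cite{KN} the reflection group of $P$ is already the supergroup of a crystallographic packing of $\SSS^n$, and we are done. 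Otherwise I would search for a facet $P_0$ meeting all its neighbours at $\pi/2$ or $\pi/4$, with each $\pi/4$--neighbour in turn meeting its own neighbours only at $\pi/2$ or $\pi/4$; whenever such a cluster exists, Lemma~\ref{lemma:doubling} yields a crystallographic packing whose supergroup is commensurable to the reflection group of $P$, hence to $\Gamma$.

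The main obstacle is to show that, for every commensurability class on the lists, at least one of the two criteria can be met. The angles $\pi/3$ and $\pi/6$ are the genuine enemy: a facet participating in such an angle can serve neither as $P_0$ nor as a $\pi/4$--neighbour in the doubling lemma, and these angles also tend to obstruct isolated roots. For the problematic diagrams I would therefore not use the recorded maximal polyhedron directly, but pass to a commensurable reflective sub- or supergroup --- for instance a finite--index reflective subgroup with a larger, more ``rectangular'' fundamental chamber --- chosen so that the offending $\pi/3$ or $\pi/6$ angles disappear and either an isolated root or a doubling cluster $P_0$ emerges. The case $n=2$ should be essentially immediate, while $n = 3, 4$ require going through the (short) classification lists and, for the few diagrams containing $\pi/3$ or $\pi/6$ angles, exhibiting an explicit commensurable polyhedron to which one of the two criteria applies. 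I expect the bookkeeping for these exceptional diagrams, rather than any single conceptual difficulty, to be the crux.
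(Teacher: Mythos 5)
Your proposal follows essentially the same strategy as the paper: reduce to the finite classification lists of Scharlau--Walhorn ($n=2,3$) and Turkalj ($n=4$), check each commensurability class for an isolated root (an orthogonal facet), invoke the Structure Theorem when one is present, and otherwise replace the group by a commensurable one for which the criterion can be met. The one substantive difference is in the fallback step: the paper does not use Lemma~\ref{lemma:doubling} here at all, but instead passes to a \emph{rationally isometric} lattice in the same commensurability class (found by trial and error) and re-runs Vinberg's algorithm until a polyhedron with an isolated root appears, the whole verification being carried out with Bottinelli's implementation. Your alternative --- doubling along a $\pi/4$--cluster, justified by the Niven-type restriction of angles to $\pi/2,\pi/3,\pi/4,\pi/6$ over $\mathbb{Q}$ --- is a legitimate second tool (it is exactly what the paper uses for Theorem~\ref{theorem:crystallographic}), but like the paper's route it offers no a priori guarantee of success: diagrams dominated by $\pi/3$ and $\pi/6$ angles defeat both criteria, and in either version the proof ultimately rests on the explicit case-by-case computation over the lists, which your proposal outlines but does not carry out.
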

For $n=2$, Kontorovich and Nakamura \cite{KN} confirmed the analogue of Theorem \ref{theorem:arithmetic-super} for the finite list of reflective extended Bianchi groups. However, the Borcherds polyhedron in dimension $21$ provides an example of a negative answer to Question \ref{question}. 

\subsection{Structure of the paper} Section~\ref{sec:Coxeter} contains all necessary preliminaries on integral, superintegral and Kleinian sphere packings. There we also recall Vinberg's arithmeticity criterion and algorithm. Sections~\ref{sec:A}, \ref{sec:B} and \ref{sec:C} contain the proofs of our main theorems. Finally, in Section~\ref{sec:remark-refl} we provide an example of a properly integral packing and study its properties.

\subsection*{Acknowledgements}

The authors would like to thank Daniel Allcock for stimulating discussions, Mathieu Dutour Sikiri\'{c}, Markus Kirschmer, and Rudolf Scharlau for allowing us access to their computations, and the referees for many comments and suggestions improving this text.

\section{Preliminaries}\label{sec:Coxeter}
\subsection{Hyperbolic  space and convex polyhedra}

Fix $n\ge2$, and a bilinear form $f(x,y)$ whose corresponding quadratic form $x\mapsto f(x,x)$ has signature $(n+1, 1)$. This defines a quadratic space $V=\R^{n+1,1}$ with product defined by the bilinear form $f$. We then obtain the disjoint union, $C$, of two open convex cones, 
$$C:=\{x\in V \mid f(x,x) < 0\}.$$ 
We fix some $x_0\in V$ having $f(x_0,x_0)>0$, and define a connected component, $C_+$, of $C$ via: $C_+:=\{x\in C \mid f(x,x_0)>0\}$.
Let $\mathbf{P} C_+ := C_+/\R_+$  denote the  projectivization of $C_+$, which serves as a model for $\HH^{n+1}\cong \mathbf{P} C_+$. We will simplify notation by writing  $(x,y)$ for $f(x,y)$ below.

The points \emph{at infinity} or, in other terms, \emph{on the ideal boundary} $\partial
\HH^{n+1}$, in this model correspond to \emph{isotropic
rays} in $V$,  $\{ \lambda x \mid \lambda \geqslant 0\}$, for $x \in V$ such that $(x,x) = 0$ and $(x,x_0)>0$.

The hyperbolic metric $\rho$ on $\HH^n$ is given by
$$
\cosh \rho (x, y) = -\frac{(x, y)}{\sqrt{(x,x)(y,y)}}.
$$

Let $\OO_{f} (\R)$ be the group of
orthogonal transformations of the space $V$, and let $\OO^\dag_{f}(\R)$ be
its subgroup of index $2$ preserving $\HH^{n+1}$.
The \emph{isometry group} of hyperbolic $(n+1)$--space $\HH^{n+1}$ is 
$\Isom(\HH^{n+1}) \cong \OO^\dag_{f}(\R)$.
By a \emph{lattice}, we mean a discrete subgroup  $\Gamma < \OO^\dag_{f}(\R)$ of finite covolume, i.e. such that the volume of the quotient $\HH^{n+1}/\Gamma$ is finite. A lattice is called \textit{uniform} if $\HH^{n+1}/\Gamma$ is compact, and \emph{non--uniform} otherwise.

Suppose that $e \in V$ has $(e,e)>0$. Then the set
$$
H_{e} = \{x \in \HH^{n+1} \mid (x,e) = 0\}
$$
is a \emph{hyperplane} in $\HH^{n+1}$ and it divides the entire space into the \emph{half--spaces}
$$
H^-_e = \{x \in \HH^{n+1} \mid (x,e) \leqslant 0\}, \qquad H^+_e = \{x \in \HH^{n+1} \mid (x,e) \geqslant 0\}.
$$
The orthogonal transformation given by the formula
$$
R_e(x) = x - 2 \frac{(e, x)}{(e, e)} e,
$$
is called the \emph{reflection in the hyperplane}
$H_e$. The plane $H_e$ 
is called the \emph{mirror} of $R_e$.
This reflection induces an inversion in the ideal boundary in the sphere $S_e = \partial H_e$ which bounds the ball $B_e = \partial H_e^+.$

\begin{definition}\label{def:Pneg}
A convex polyhedron in $\HH^{n+1}$,
$$
P = \bigcap_{j=1}^N H_{e_j}^-,     
$$
is the intersection, assumed to have non--empty interior, of finitely many half--spaces. 
A generalized convex polyhedron is the intersection (with non--empty interior) of a family (possibly, infinite) 
of half--spaces such that every ball intersects only finitely many of their boundary hyperplanes.
\end{definition}

That is, a generalized polyhedron may have infinitely many walls but locally looks like a convex polyhedron.

\begin{definition}
A generalized convex polyhedron is said to be acute--angled if all its dihedral angles do not exceed $\pi/2$. A generalized convex polyhedron is called a Coxeter polyhedron if all its dihedral angles are of the form $\pi/k$, where $k \in \{2,3,4,\ldots,+\infty\}$.
\end{definition}

\begin{definition}
The Coxeter--Vinberg diagram of a Coxeter polyhedron is a graph having a vertex corresponding to each facet, and vertices connected by edges of multiplicity $k$, if the corresponding facets meet at dihedral angle  $\pi/(k+2)$, with $k\geqslant 0$, or by dotted edges. This includes the following interpretations:
\begin{itemize}
    \item An edge of multiplicity $k=\infty$, represented by a bold edge. This happens when the respective facets meet at the ideal boundary $\partial \mathbb{H}^{n+1}$;
    \item An edge of multiplicity $k=0$, i.e. no edge being present between a pair of vertices. This corresponds to the facets being orthogonal;
    \item A dotted edge, corresponding to facets that diverge at infinity and admit a common perpendicular of length $\ell > 0$. Sometimes such an edge may also be labelled with $\cosh \ell$.
\end{itemize}
\end{definition}

It is known that the fundamental domains of discrete reflection groups are generalized Coxeter polyhedra (see \cite{Vin67, Vin85}). A lattice generated by reflections in hyperplanes is called \textit{reflective}.

\subsection{Kleinian sphere packing and the Structure Theorem}

In this subsection we discuss some details regarding Kleinian sphere packings and computational aspects of working with them; see \cite{K^2} for background.

As in \S~2.1, fix a form $f$ acting on a quadratic space $V\cong\R^{n+1,1}$.
Dual to $V$ is the vector space $V^*=\{x^*:V\to\R,\text{ linear}\}$ which is a quadratic space under the dual form $f^*$. The latter is determined as follows. 
Let $x_1^*, x_2^*\in V^*$ be covectors having corresponding vectors $x_1,x_2\in V$, that is,  $x_j^*:x\mapsto f(x,x_j)$ for all $x\in V$; then
 $f^*(x_1^*,x_2^*)=f(x_1,x_2)$. 
 
Next it will be useful and convenient to  construct an ``inversive coordinate system'' for spheres and hyperplanes in the ideal boundary $\partial \HH^{n+1}$ as follows. First, we choose any two $f^*$--isotropic covectors $b, \widehat{b} \in V^*$, such that $f^*(b,\widehat{b}) = -2$. Then take any system of orthonormal covectors $b_j$, $j\in\{1,\dots,n\}$, in the space orthogonal to the plane generated by $b$ and $\widehat{b}$. The inversive coordinate system of a sphere $S_e$ corresponding to the vector $e\in\{x\in V: f(x,x)>0\}$ is then given by 
$$
v_e = ( b(e),\widehat b(e), b_1(e),\dots, b_n(e) ).
$$
This vector $v_e$ contains all of the data of the root $e$, corresponding to a choice of Euclidean coordinates on $\partial\HH^{n+1}$; here $b$ determines a point at infinity, $\widehat b$ determines an origin, and the map $e\mapsto (b_1(e),\dots,b_n(e))/b(e)$, identifies $\{e\in V: f(e,e)>0,\, b(e)\neq 0\}$ with $\R^n\cong \partial \HH^{n+1}\setminus\{\infty\}$. Indeed, consider the sphere $S_e$ in $\partial\HH^{n+1}$ corresponding to the ideal boundary of the hyperplane $H_e$ which is $f$--orthogonal to $e$; then the ``bend'' (inverse signed radius) of the sphere is $b(e)$.
Again, the sign of the radius is determined by setting it to be negative if and only if the point at infinity is contained inside the ball $B_e=\partial H_e^+$.
The co--radius of $S_e$ is defined to be the radius of the inversion of $S_e$ through the unit sphere; the co--bend is the inverse of the co--radius, and is given by $\hat b(e)$. Moreover, the  center of $S_e$ is $(b_1(e),\dots,b_n(e))/b(e).$
If the bend $b(e)=0$, that is, $S_e$ is a hyperplane passing through the point at infinity, then this ``center'' is reinterpreted to mean the limit as spheres converge to this hyperplane; in that case, this ratio becomes the Euclidean unit normal to this hyperplane in $\partial \HH^{n+1}$.

When we have a collection $\{e_\alpha\}_{\alpha \in A}$ of such vectors $e_\alpha\in V$ running over some indexing set $ A$, we simplify notation, writing $S_\alpha$, $H_\alpha$, etc, for $S_{e_\alpha}$, $H_{e_\alpha}$, resp.

\begin{figure}[ht]
    \centering
    \includegraphics[scale=0.3]{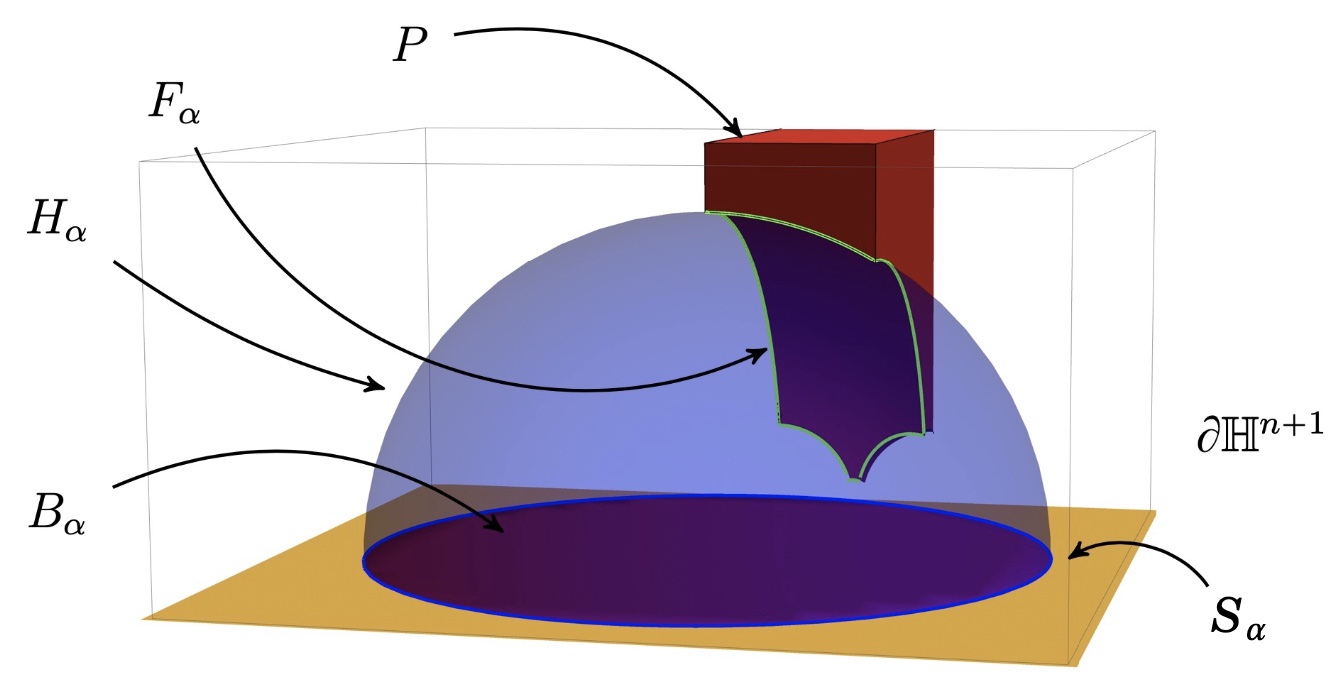}

    \caption{A facet $F_\alpha$ of a polyhedron $P$ has supporting hyperplane $H_\alpha$.}
    \label{fig:struct-th}
\end{figure}

\begin{theorem}[Structure Theorem \cite{KN, K^2}] \label{theorem:structure-kleinian} \quad

\begin{itemize}
\item[\bf (1)] Let $\mathscr{P}$ be a Kleinian packing of $\SSS^n \simeq \partial \HH^{n+1}$. Then its supergroup $\tG$ is a lattice in $\Isom(\HH^{n+1})$.

\item[\bf (2)] Suppose that $\tG < \Isom(\HH^{n+1})$ is a lattice containing at least one reflection with a convex fundamental polyhedron $P$ (as in Definition \ref{def:Pneg}), and a minimal generating set $\widetilde{S}$ for $\tG$, where elements of $\widetilde{S}$ correspond to face--pairing transformations of $P$. Let $\widehat S \subset \widetilde{S}$ be a non--empty set of generators corresponding to reflections $R_\alpha$ in certain facets $F_\alpha$, $\alpha \in A$, of $P$. For each $\alpha \in A$ let
$B_\alpha = \partial_\infty H_\alpha^+\subset \partial \HH^{n+1} = \SSS^n$ be the ball bounded by
$S_\alpha = \partial_\infty H_\alpha$,  the invariant sphere for the reflection $R_\alpha$ with respect to the supporting hyperplane $H_\alpha$ of the facet $F_\alpha$ (see Figure~\ref{fig:struct-th}). Assume that all the hyperplanes $H_\alpha$ are pairwise disjoint or parallel, and that, if they meet other supporting hyperplanes of facets of $P$, then they do so orthogonally. Let $S = \widetilde{S}\setminus \widehat S$ and let $\Gamma = \langle S \rangle$.

Then the orbit of $\{B_\alpha \mid \alpha \in A\}$ under $\Gamma$ is a Kleinian packing of $\SSS^n$ with symmetry group $\Gamma$ and supergroup $\tG$.
\end{itemize}
\end{theorem}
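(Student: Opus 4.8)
\subsection*{Proof proposal} The two assertions go in opposite directions, so I would prove them separately, in each case passing between the packing and an explicit fundamental domain for $\tG$. For part (1), let $\mathcal{P}=\{B_\alpha\}$ be a Kleinian packing, and for each $\alpha$ let $\mathcal{B}_\alpha\subset\HH^{n+1}$ be the open half-space with $\partial_\infty\mathcal{B}_\alpha=\mathrm{int}(B_\alpha)$ and bounding hyperplane $H_\alpha$. Set
\[
\Omega=\HH^{n+1}\setminus\bigcup_\alpha\mathcal{B}_\alpha=\bigcap_\alpha\bigl(\HH^{n+1}\setminus\mathcal{B}_\alpha\bigr),
\]
an intersection of closed half-spaces, hence convex. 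Because the interiors of the balls are disjoint, $\Omega$ is a fundamental domain for the reflection group $\Gamma_R$; because their union is dense, $\partial_\infty\Omega$ is exactly the residual set of $\mathcal{P}$, which by hypothesis is the limit set $\Lambda(\Gamma_S)$. Since $\Gamma_S$ permutes the $B_\alpha$, it permutes the walls $H_\alpha$ and preserves $\Omega$, so $\Gamma_S$ normalizes $\Gamma_R$ and $\tG=\Gamma_R\rtimes\Gamma_S$.

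Intersecting $\Omega$ with a Dirichlet fundamental domain $F$ for $\Gamma_S$ produces a candidate fundamental polyhedron $D=F\cap\Omega$ for $\tG$: its walls lying on the $H_\alpha$ are paired by the reflections $R_\alpha$, and its remaining walls are paired by the side-pairings of $F$. To conclude that $\tG$ is a lattice I would verify the hypotheses of Poincar\'e's fundamental polyhedron theorem and, crucially, that $\mathrm{vol}(D)<\infty$. Here geometric finiteness of $\Gamma_S$ enters: $F$ has finitely many sides and cusps, the convex core $C(\Gamma_S)$ has finite volume, and the packing hemispheres cut off the flaring ends of $\Omega$ so that $\Omega$ exceeds $C(\Gamma_S)$ only by a set of finite volume. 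Poincar\'e's theorem then identifies $D$ as a finite-volume fundamental domain, giving the lattice claim.

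For part (2) the argument runs in reverse. The orthogonality hypotheses are used first, algebraically: since each mirror $H_\alpha$ is disjoint from, parallel to, or orthogonal to every other wall of $D$, the reflections $\gamma_\alpha$ generate a subgroup $\Gamma_R=\langle R\rangle$ normalized by $\Gamma=\langle S\rangle$, whence $\tG=\Gamma_R\rtimes\Gamma$. I would then check the packing axioms for $\mathcal{P}:=\Gamma\cdot\{S_\alpha\}$. Disjointness of the half-spaces $\mathcal{B}_\alpha$ within $D$ follows from the facets $F_\alpha$ being pairwise non-crossing, and it propagates over the whole orbit because $\Gamma$ and $\Gamma_R$ together tile $\HH^{n+1}$ by interior-disjoint copies of $D$; density of $\bigcup B_\alpha$ follows from $\tG$ being a lattice, so that $\Lambda(\tG)=\SSS^n$, combined with the residual set being the limit set $\Lambda(\Gamma)$, which has measure zero since $\Gamma$ has infinite covolume. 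Finally $\Gamma$ is geometrically finite (it acts with the finite-sided fundamental domain obtained by discarding the reflecting walls of $D$), its limit set is the residual set, so $\mathcal{P}$ is Kleinian with symmetry group $\Gamma$; and conjugating the $R_\alpha$ by $\Gamma$ yields exactly the reflections in all spheres of $\mathcal{P}$, so the supergroup $\langle\Gamma,\Gamma_R\rangle$ is again $\tG$.

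In both directions the same analytic point is the heart of the matter: the equivalence between ``the balls are dense in $\SSS^n$'' and ``the residual set is the measure-zero limit set of a geometrically finite group.'' In part (1) this is what forces $\Omega$ to differ from the convex core only by finite volume and hence makes $D$ a finite-volume domain; in part (2) it is precisely what must be extracted from the lattice hypothesis on $\tG$. Pinning this down---verifying $\partial_\infty\Omega=\Lambda(\Gamma_S)$ together with the finite-volume estimate for $\Omega\cap F$---is where geometric finiteness does the real work, and I expect it to be the main obstacle.
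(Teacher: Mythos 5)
You should first note that the paper does not prove this statement at all: it is quoted verbatim from Kontorovich--Nakamura \cite{KN} and Kapovich--Kontorovich \cite{K^2}, so there is no in-paper proof to compare against, and your attempt has to be judged on its own. Part (2) of your sketch is essentially the argument of \cite{KN}: precise invariance of the half-spaces $\mathcal{B}_\alpha$ under the tiling by copies of $D$ gives disjointness, and fullness of $\Lambda(\tG)$ plus the nullity of $\Lambda(\Gamma)$ gives density. The one point you should not wave at is that $\Lambda(\Gamma)$ has measure zero: this requires knowing $\Gamma$ is geometrically finite of infinite covolume, and your parenthetical reason (``it acts with the finite-sided fundamental domain obtained by discarding the reflecting walls of $D$'') is not literally correct, since discarding walls of a fundamental domain for $\tG$ does not produce a fundamental domain for the subgroup $\Gamma$; geometric finiteness of $\Gamma$ needs its own argument.

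Part (1) has two genuine gaps. First, the definition of a Kleinian packing used here only requires that the accumulation set of $\mathcal{P}$ equal $\Lambda(\Gamma_S)$; it does \emph{not} say that $\Gamma_S$ permutes the balls $B_\alpha$. Your very first structural step --- ``$\Gamma_S$ permutes the walls, preserves $\Omega$, normalizes $\Gamma_R$, so $\tG=\Gamma_R\rtimes\Gamma_S$'' --- therefore assumes something that must be proved, and indeed establishing that $\langle\Gamma_R,\Gamma_S\rangle$ is even discrete is one of the main points of \cite{K^2}. Second, the finite-volume claim for $D=F\cap\Omega$ is asserted rather than proved: ``the packing hemispheres cut off the flaring ends of $\Omega$ so that $\Omega$ exceeds $C(\Gamma_S)$ only by a set of finite volume'' is precisely the analytic content of the theorem, and nothing in your sketch controls how the hyperplanes $H_\alpha$ approach $\Lambda(\Gamma_S)$ near parabolic fixed points, which is where such an estimate can fail or succeed. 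You correctly flag this as the main obstacle, but flagging it is not closing it. For what it is worth, the published route is different from yours: rather than building an explicit Poincar\'e fundamental polyhedron, \cite{K^2} proves that $\tG$ is geometrically finite (via combination-theorem arguments) and that $\Lambda(\tG)=\SSS^n$ (by iterating the reflections $R_\alpha$ into the balls), and then invokes the fact that a geometrically finite discrete group with full limit set is a lattice. That route avoids both of the volume and side-pairing verifications your approach would require.
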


We remark that there is an even simpler description of the above construction in the case of crystallographic sphere packings. Suppose as above that $\widetilde\Gamma$ is generated by a set $\widetilde S$ of reflections, with Coxeter--Vinberg diagram with vertices $\widetilde S$. Suppose $\widetilde S $ can be partitioned into a ``cluster'' $\widehat S$ and ``cocluster'' $S=\widetilde S\setminus \widehat S$, with the following properties: (i) Any pair of vertices of $\widehat S$ are either connected by a dashed edge or a bold edge, and (ii) any vertex of $\widehat S$ and a vertex of $S$ are either connected as above, or disconnected (that is, orthogonal). 
Then, as in the Structure Theorem, the orbit of the spheres in $\widehat S$ under the action of the group $\Gamma $ generated by inversions in $S$ is a crystallographic packing. Moreover, every such packing arises in this way, see \cite{KN}.

\subsection{Vinberg's arithmeticity criterion}
In order to understand if a given lattice generated by reflections is arithmetic or not, we will use a handy criterion developed by Vinberg \cite{Vin67}. Recall that $H_e^- = \{x \in V \mid (x,e) \leqslant 0\}$ is the half--space associated to a hyperplane $H_e$ with normal $e$ having $(e,e)>0$. If $P$ is a Coxeter polyhedron in $\HH^{n+1}$, such that 
$$P = \bigcap_{j=1}^N H_{e_j}^-, $$
then let $G(P) = \{g_{ij}\}^N_{i,j=1} = \{(e_i, e_j)\}^N_{i,j=1}$ denote its Gram matrix, and let the field $K = \Q\left(\{g_{ij}\}^N_{i,j=1}\right)$ be generated by its entries. 

The set of all cyclic products of entries of a matrix $A$ (i.e. the set consisting of all possible products of the form $a_{i_1 i_2} a_{i_2 i_3} \ldots a_{i_k i_1}$) will be denoted by $\Cyc(A)$. Let $k = \Q(\Cyc(G(P))) \subset K$,  and let $\OOO_k$ be the ring of integers of $k$.

Given the lattice $\Gamma$ generated by reflections  in the facets of a Coxeter polyhedron $P$, we can determine if $\Gamma$ is arithmetic, quasi--arithmetic, or neither.

\begin{theorem}[Vinberg's arithmeticity criterion \cite{Vin67}]\label{V}
Let $\Gamma$ be a reflection group acting on $\HH^{n+1}$ with finite volume fundamental Coxeter polyhedron $P$. Then $\Gamma$ is arithmetic if and only if
the following conditions hold:
\begin{itemize}
    \item[{\bf(1)}] $K$ is a totally real algebraic number field;
    \item[{\bf(2)}] for any embedding $\sigma \colon K \to \R$, such that $\sigma \mid_{k} \ne \mathrm{id}$, $G^\sigma(P)$ is positive semidefinite;
    \item[{\bf(3)}] $\Cyc(2 \cdot G(P)) \subset \OOO_{k}$.
\end{itemize}
The group $\Gamma$ is  quasi--arithmetic if and only if it satisfies conditions \textbf{(1)}--\textbf{(2)}, but not necessarily \textbf{(3)}.
\end{theorem}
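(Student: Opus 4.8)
The plan is to realize $\Gamma$ as a group of $k$--rational (and, up to commensurability, $\OOO_k$--integral) isometries of a quadratic form defined over $k$, and then invoke the standard arithmeticity theory for orthogonal groups of simplest type. First I would normalize the outward normals so that $(e_i,e_i)=1$; then each generating reflection acts on the root space by $R_i(e_j)=e_j-2g_{ij}e_i$, so that the matrix entries of every element of $\Gamma$ in the root basis are $\Z$--polynomials in the quantities $2g_{ij}$. Expanding the trace of a word $R_{i_1}\cdots R_{i_\ell}$ and collecting terms along closed loops in the Coxeter--Vinberg diagram shows that the trace field of $\mathrm{Ad}\,\Gamma$ is generated by cyclic products $2^\ell g_{i_1 i_2}\cdots g_{i_\ell i_1}$; since powers of $2$ are rational, this field is exactly $k=\Q(\Cyc(G(P)))$. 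This identifies $k$ as the invariant trace field, hence as the field of definition of the adjoint group.

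Next I would descend the form to $k$. Assuming the diagram is connected (otherwise one treats each irreducible factor separately), I would fix a spanning tree and rescale the roots $e_i\mapsto t_i e_i$ so as to force every tree--edge entry into $k$; the hypothesis that all cyclic products lie in $k$ then forces the remaining loop entries into $k$ as well, since each such entry is a ratio of a cyclic product by a product of tree entries. After this diagonal conjugation $\Gamma$ becomes a subgroup of $\mathrm{O}(q)(k)$, where $q$ is the form carried by the $k$--span $L$ of the rescaled roots; because $P$ has finite volume, $L$ has dimension $n+1$ and $q$ has signature $(n,1)$ at the identity embedding.

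With $\mathbf{G}=\mathrm{O}(q)$ now defined over $k$, the problem reduces to the classical criterion for $\mathbf{G}(\OOO_k)$ to be a lattice of simplest type: by Borel--Harish-Chandra, together with the requirement that the factors at all infinite places other than the identity be compact, $\Gamma$ embeds in an arithmetic lattice precisely when $k$ is totally real (so all conjugate forms are real) and $q^\sigma$ is definite for every non-identity real embedding $\sigma$. Since two embeddings of $K$ agreeing on $k$ yield the same conjugate form, transcribing definiteness of $q^\sigma$ into positive semidefiniteness of $G^\sigma(P)$ for all $\sigma\colon K\to\R$ with $\sigma|_k\neq\mathrm{id}$ (the semidefiniteness reflecting that the $N$ roots of a non-simplicial polyhedron give a degenerate Gram matrix while the induced form on $L$ is definite) yields exactly conditions \textbf{(1)} and \textbf{(2)}. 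These are therefore the conditions for quasi--arithmeticity, i.e.\ for $\mathbf{G}$ to be admissible over its field of definition $k$.

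Finally, conditions \textbf{(1)}--\textbf{(2)} only place $\Gamma$ inside $\mathbf{G}(k)$ with $\mathbf{G}$ admissible; upgrading to arithmeticity requires that $\Gamma$ be commensurable with the integral points $\mathbf{G}(\OOO_k)$, which means $\Gamma$ must preserve an $\OOO_k$--lattice in $L$. The conjugation--invariant combinations of the matrix entries of $\Gamma$ in the rescaled root basis are exactly the cyclic products $\Cyc(2G)$, so preservation of such a lattice holds if and only if $\Cyc(2G(P))\subset\OOO_k$; conversely, an integral $\Gamma$ forces these products to be algebraic integers. This is precisely condition \textbf{(3)} and the gap between the quasi--arithmetic and arithmetic cases. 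I expect the main obstacle to be the second step, the descent of the form to $k$: one must check that the diagonal rescaling can be performed over $\R$ while landing all final Gram entries in $k$, and that the invariant trace field equals $\Q(\Cyc(G))$ on the nose rather than merely containing it, which requires careful loop bookkeeping in the Coxeter--Vinberg diagram.
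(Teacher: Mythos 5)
This statement is quoted in the paper as a known result of Vinberg, with a citation to \cite{Vin67}; the paper contains no proof of it, so there is nothing internal to compare your argument against. On its own terms, your sketch follows the classical line of Vinberg's original argument: identify $k=\Q(\Cyc(G(P)))$ as the (adjoint) trace field, descend the quadratic form to $k$ by rescaling roots along a spanning tree of the Coxeter--Vinberg diagram, apply Borel--Harish-Chandra together with compactness at the non-identity archimedean places to get conditions \textbf{(1)}--\textbf{(2)} (quasi-arithmeticity), and isolate \textbf{(3)} as the obstruction to preserving an $\OOO_k$-lattice. The one step you should not leave at the level of ``conjugation-invariant combinations of matrix entries'': the forward implication of \textbf{(3)} requires actually exhibiting the invariant $\OOO_k$-lattice, which Vinberg does by taking the $\OOO_k$-module spanned by the path-products $2g_{i_0 i_1}2g_{i_1 i_2}\cdots 2g_{i_{l-1} i_l}\,e_{i_l}$ over all paths from a fixed base vertex; condition \textbf{(3)} is exactly what makes this module finitely generated and $\Gamma$-stable, and its pairwise inner products are the cyclic products you mention. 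With that construction supplied, and with the caveat you already flag about verifying that the trace field equals $\Q(\Cyc(G(P)))$ on the nose, your outline is a faithful reconstruction of the standard proof.
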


When $\Gamma$ is (quasi)arithmetic, $k$ is called its \textit{field of definition} (or \textit{ground field}), and we say that $\Gamma$ is $k$--(quasi)arithmetic to emphasize the ground field. The above criterion simplifies greatly if $k = \mathbb{Q}$. Then the lattice $\Gamma$ will always be quasi--arithmetic, and it will be arithmetic if and only if the cyclic products in Theorem~\ref{V}~(3) are integers. 

\subsection{Vinberg's algorithm} In addition to his study of arithmeticity of reflective lattices, Vinberg also devised an algorithm that produces the set of generating reflections for the maximal reflective subgroup of the automorphism group of a quadratic from. 

Let $f$ be a quadratic form of signature $(n+1, 1)$ over a totally real number field $k$ with the ring of integers $\OOO_k$. Moreover, assume that $f$ is admissible, that is, for each non--trivial Galois embedding $\sigma: k \rightarrow \mathbb{R}$ the form $f^\sigma$ (obtained by applying $\sigma$ to the coefficients of $f$) is positive definite. Then the group $\OO_f(\OOO_k)$ of integer points of the orthogonal group $\OO_f(\mathbb{R})$ is a lattice \cite[Chapter 6]{AVS93}. 

The form $f$ as above is called reflective if $\OO_f(\OOO_k)$ is generated by a finite number of reflections, up to finite index. The algorithm devised by Vinberg in \cite{Vin72} produces a set of generating reflections for any reflective quadratic form. We shall refer to it as \textit{the Vinberg algorithm}. Several computer realizations of the algorithm are available \cite{Gug, BP, Bot}.

In contrast, if the form $f$ is not reflective, the algorithm never halts. The later work by Bugaenko \cite{Bug} also provided a way to certify that a given Lorentzian form $f$ is non--reflective. This ``method of infinite symmetry'' has been also implemented in \cite{Gug, BP, Bot} to various extent. 

\section{Proofs}
\label{sec:proofs}

\subsection{Proof of Lemma~\ref{lemma:doubling}}

Let $P$ be a finite volume Coxeter polyhedron in $\HH^{n+1}$ and $P_0$ be a facet that meets other facets of $P$ at angles either $\pi/4$ or $\pi/2$. Let $P_1, \ldots P_k$ be all the facets of $P$ that meet $P_0$ at $\pi/4$. Assume that each $P_j$ meets its neighbors at ``even'' angles of the form $\pi/2^m$, with $m \geqslant 1$, an integer. 

If $P$ is a polyhedron as above, and $F$ is a facet of $P$, let $R_F$ denote the reflection in the hyperplane of $F$. 

Then the following inductive steps will produce a polyhedron with a facet orthogonal to its neighbors.

1. The double $P^{(1)}$ of $P$ along the facet $F_1 = P_1$ has facets $P_0$ and $R_{F_1} (P_0)$, both of which form $(k-1)$ angles $\pi/4$ with their respective neighbours. Observe that $P^{(1)}$ is again a Coxeter polyhedron, since doubling of even Coxeter angles produces even Coxeter angles again or, if we double a right angle, it produces a new facet.  

2. The facet $F_2 = R_{F_1} (P_2)$ of $P^{(1)}$ forms even Coxeter angles with its neighbours. Thus the double $P^{(2)}$ of $P^{(1)}$ along $F_2$ has the facet $R_{F_1}(P_0)$ that forms $(k-2)$ angles $\pi/4$ with its neighbours. All the conclusions of the previous step apply respectively to $P^{(2)}$.  

3. Continuing in this way, we obtain a polyhedron  $P^{(k)}$ from  $P^{(k-1)}$ by doubling along  $F_k = R_{F_{k-1}} (P_k)$, and the facet 
$R_{F_{k-1}}\cdots R_{F_1}(P_0)$ of $P^{(k)}$ will be orthogonal to all of its neighbours. Such a polyhedron produces a crystallographic sphere packing by the Structure Theorem (Theorem~\ref{theorem:structure-kleinian}). 

\subsection{Proof of Theorem~\ref{theorem:dimension-bound}}\label{sec:A}

The proof of Theorem~\ref{theorem:dimension-bound} follows from the results of Esselmann \cite{E}, the Structure Theorem, and the following simple lemma which is well known to experts in hyperbolic reflection groups (and also follows from more general results of \cite{BK21}). 

\begin{lemma}\label{lemma:stabilizer}
Let $\Gamma<\Isom(\HH^{n+1})$ be the symmetry group of a crystallographic packing $\mathscr{P}$ of $\SSS^n$, and let $\Gamma_0$ be the stabilizer of a sphere in $\mathscr{P}$. Then $\Gamma_0$ is a reflective lattice acting on $\HH^n$. If $\Gamma$ is arithmetic, then $\Gamma_0$ is also arithmetic.
\end{lemma}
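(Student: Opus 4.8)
The plan is to exploit the crystallographic structure directly. Let me think about what the objects are. We have $\Gamma < \Isom(\HH^{n+1})$ acting as the symmetry group of a crystallographic packing $\mathscr P$ of $\SSS^n \simeq \partial\HH^{n+1}$. By the definition of crystallographic, $\Gamma$ (and its supergroup $\tG$) is a reflective lattice; so $\tG$ is generated by reflections in the facets of a finite-volume Coxeter polyhedron $D\subset\HH^{n+1}$.

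Let me reconsider the structure. Using the Structure Theorem and its reflection-group reformulation: the generating reflections of $\tG$ split into a cluster $R$ (the sphere-generating reflections $\gamma_\alpha$, $\alpha\in A$) and cocluster $S$ with $\Gamma=\langle S\rangle$. The spheres of $\mathscr P$ are the $\Gamma$-orbit of $\{S_\alpha\}_{\alpha\in A}$, where $S_\alpha = \partial_\infty H_\alpha$ and $H_\alpha \cong \HH^n \subset \HH^{n+1}$ is the mirror of $\gamma_\alpha$. The key geometric input from the hypothesis (condition of the Structure Theorem) is that each such hyperplane $H_\alpha$ is orthogonal to every supporting hyperplane it meets. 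This orthogonality is exactly what will make the stabilizer of a sphere into a reflective lattice on that sphere's hyperplane.

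So the plan is as follows. First I would fix a sphere $S_\beta \in \mathscr P$ with hyperplane $H_\beta\cong\HH^n$, and set $\Gamma_0 = \Stab_\Gamma(S_\beta)$, which equals the stabilizer in $\Gamma$ of $H_\beta$. Since $\Gamma_0$ preserves $H_\beta$, restriction gives a homomorphism $\Gamma_0 \to \Isom(H_\beta)\cong\Isom(\HH^n)$; this restriction is injective because an isometry of $\HH^{n+1}$ fixing $H_\beta$ pointwise and lying in the discrete group $\Gamma$ must be trivial (the only nontrivial such element is $\gamma_\beta$ itself, which is a reflection not in $\Gamma=\langle S\rangle$). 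The main point to establish is that $\Gamma_0$ is a \emph{lattice} in $\Isom(\HH^n)$, i.e. acts with finite covolume on $H_\beta$; this follows from geometric finiteness of $\Gamma$ (hence of $\tG$) together with the fact that $H_\beta$ is a totally geodesic hyperplane preserved by $\Gamma_0$ — a standard consequence is that the stabilizer of a rational (orbit) hyperplane in a lattice is itself a lattice on that hyperplane. To see it is reflective, I would use the orthogonality hypothesis: the mirrors of $\tG$ that meet $H_\beta$ do so orthogonally, so each such reflection restricts to a reflection of $H_\beta$; these restricted reflections generate a finite-index subgroup of $\Gamma_0$, which is therefore reflective. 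Concretely, the facets of $D$ whose hyperplanes cross $H_\beta$ orthogonally cut out a Coxeter polyhedron in $H_\beta$ that serves as a fundamental domain (up to finite index) for $\Gamma_0$.

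For the arithmetic statement, I would invoke the standard fact that arithmeticity is inherited by the stabilizer of a rational totally geodesic subspace. Since $\Gamma$ arithmetic implies $\tG$ arithmetic (they are commensurable), the form $f$ of signature $(n+1,1)$ defining $\tG$ is defined over $\Q$ (or a totally real $k$), and $H_\beta = e_\beta^\perp$ is cut out by the rational vector $e_\beta$; the restriction of $f$ to $e_\beta^\perp$ is a form of signature $(n,1)$ over the same field, and $\Gamma_0$ is commensurable with the integer orthogonal group of this restricted form, hence arithmetic by definition. The main obstacle I anticipate is the finite-covolume claim for $\Gamma_0$: one must rule out that $H_\beta$ meets the limit set of $\Gamma$ in a ``thin'' set, or equivalently argue that $\Stab_\Gamma(H_\beta)$ acts cocompactly on $H_\beta$ modulo cusps. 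I expect this to be handled by the geometric finiteness of $\tG$ (which is where the cited \cite{BK21} and the experts' folklore enter), reducing the whole lemma to the orthogonality bookkeeping plus this one citation.
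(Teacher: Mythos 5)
Your argument is correct and follows essentially the same route as the paper: the sphere corresponds to a facet $P_0$ of the Coxeter polyhedron of the reflective supergroup, orthogonal to all its neighbours, so that $P_0$ is itself a finite-volume Coxeter polyhedron in $H_\beta\cong\HH^{n}$ and $\Gamma_0$ is, up to finite index, its reflection group. The only divergence is the arithmeticity step, where the paper applies Vinberg's criterion to the Gram matrix of $P_0$ (a principal submatrix of that of $P$, thanks to orthogonality), while you restrict the rational quadratic form to $e_\beta^{\perp}$; both arguments are standard and valid.
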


\begin{proof}
Indeed, the sphere corresponds to a facet $P_0$ of the Coxeter polyhedron $P$ associated to the packing $\mathscr{P}$, and $P_0$ is orthogonal to all the adjacent facets. Then $\Gamma_0$ coincides with the finite covolume hyperbolic group generated by reflections in the facets of $P_0$. The orthogonality of $P_0$ to its neighbours together with Vinberg's criterion (Theorem~\ref{V}) implies that $\Gamma$ being arithmetic makes $\Gamma_0$ arithmetic too.  
\end{proof}

\medskip

It is easy to see that Lemma~\ref{lemma:stabilizer} implies Proposition~\ref{prop:lower-dimension} by applying it to the setting of superintegral crystallographic sphere packings. 
Indeed, in \cite{E}, Esselmann showed that Borcherds' $21$--dimensional polyhedron from \cite{Bor} is a unique, up to commensurablity, $\Q$--arithmetic Coxeter polyhedron in $\HH^{21}$ and that there are no $\Q$--arithmetic Coxeter polyhedra in $\HH^{20}$. 
If there was a superintegral crystallographic  packing of $\SSS^{19}$, the Structure Theorem would imply the existence of a $\Q$--arithmetic reflective lattice in $\HH^{20}$; this is ruled out by Esselmann.
Now, if there was a superintegral crystallographic packing of $\SSS^{20}$ coming from a $\Q$--arithmetic reflective lattice in $\HH^{21}$, then there would be a $\Q$--arithmetic reflective lattice in $\HH^{20}$ by Lemma \ref{lemma:stabilizer}; this is again ruled out by Esselmann \cite{E}. Similarly, any superintegral crystallographic packing of $\SSS^n$ with $n\geqslant 21$ is ruled out by Esselmann by the nonexistence of $\Q$--arithmetic reflective lattices in these dimensions.

\subsection{Proof of Theorem~\ref{theorem:crystallographic}}\label{sec:B} Our higher--dimensional examples of superintegral crystallographic sphere packings come from  Borcherds's $21$--di\-men\-sio\-nal polyhedron $P_{21}$ \cite{Bor}. By describing its lower--dimensional faces we can fill in some of the dimensions. For example, in $\mathbb{H}^{19}$ we have a face $P_{19}$ (first found by Vinberg  and Kaplinskaja in \cite{KV}) that satisfies the conditions of Lemma~\ref{lemma:doubling}.

There are two faces $P^{(i)}_{18}$, $i=1,2$, each with an orthogonal facet, in $\mathbb{H}^{18}$. Here, $P^{(1)}_{18}$ denotes the polyhedron (with $37$ facets) described by  Vinberg and Kaplinskaja in \cite{KV}. The other polyhedron (with $24$ facets) $P^{(2)}_{18}$ was found by Allcock in \cite[Examples 2.4 \& 2.5]{A}. Both polyhedra $P^{(1)}_{18}$ and $P^{(2)}_{18}$ are also described by Vinberg in \cite[Section 7]{Vin15}. 

By descending to lower--dimensional Coxeter faces (sometimes we have to pass via non--Coxeter faces of lower codimension in order to find their faces of higher codimension that turn out to be Coxeter), we obtain another face $P_{15}$ satisfying Lemma~\ref{lemma:doubling} in $\mathbb{H}^{15}$. 

The matrix provided in Figure~\ref{fig:P15-matrix} describes the combinatorial and geometric structure of $P_{15}$. Notice that we can apply Lemma~\ref{lemma:doubling} to facet $1$, as follows from the first three rows (or columns) of the given matrix. 

\begin{figure}
\centering
\begin{footnotesize}
$$
\left(
\begin{array}{ccccccccccccccccccccccccccccc}
 {\scriptscriptstyle \blacklozenge} & 4 & 4 & \text{u} & \text{u} & \text{u} & \text{u} &
   \cdot & \text{u} & \cdot & \cdot & \cdot & \cdot &
   \text{u} & \cdot & \cdot & \cdot & \cdot &
   {\scriptscriptstyle\infty} & \cdot & \cdot & \cdot & \cdot &
   {\scriptscriptstyle\infty} & \cdot & {\scriptscriptstyle\infty} & \cdot & \cdot &
   \cdot \\
 4 & {\scriptscriptstyle \blacklozenge} & {\scriptscriptstyle\infty} & 4 & 4 & 4 & 4 & \cdot & 4 &
   \cdot & \cdot & \cdot & \cdot & \cdot & \cdot &
   \cdot & \cdot & \cdot & 4 & \cdot & \cdot &
   \cdot & 4 & \cdot & \cdot & \cdot & \cdot & 4 &
   \cdot \\
 4 & {\scriptscriptstyle\infty} & {\scriptscriptstyle \blacklozenge} & 4 & 4 & 4 & 4 & \cdot & 4 &
   \cdot & \cdot & \cdot & \cdot & \cdot & \cdot &
   \cdot & \cdot & \cdot & \cdot & 4 & \cdot &
   \cdot & \cdot & 4 & \cdot & \cdot & 4 & \cdot &
   \cdot \\
 \text{u} & 4 & 4 & {\scriptscriptstyle \blacklozenge} & \text{u} & \text{u} & \text{u} &
   \cdot & \text{u} & \cdot & {\scriptscriptstyle\infty} & \cdot &
   \cdot & \cdot & \cdot & \text{u} & \cdot & \cdot &
   \cdot & \cdot & \cdot & \cdot & \cdot & \cdot &
   \cdot & \cdot & {\scriptscriptstyle\infty} & {\scriptscriptstyle\infty} & \cdot \\
 \text{u} & 4 & 4 & \text{u} & {\scriptscriptstyle \blacklozenge} & \text{u} & \text{u} &
   \cdot & \text{u} & {\scriptscriptstyle\infty} & \cdot & \cdot &
   \cdot & \cdot & \cdot & \text{u} & \cdot & \cdot &
   \cdot & \cdot & \cdot & \cdot & {\scriptscriptstyle\infty} &
   {\scriptscriptstyle\infty} & \cdot & \cdot & \cdot & \cdot &
   \cdot \\
 \text{u} & 4 & 4 & \text{u} & \text{u} & {\scriptscriptstyle \blacklozenge} & \text{u} &
   \cdot & \text{u} & \cdot & \cdot & \cdot &
   {\scriptscriptstyle\infty} & \text{u} & \cdot & \cdot & \cdot &
   \cdot & \cdot & {\scriptscriptstyle\infty} & \cdot & \cdot &
   \cdot & \cdot & \cdot & \cdot & \cdot &
   {\scriptscriptstyle\infty} & \cdot \\
 \text{u} & 4 & 4 & \text{u} & \text{u} & \text{u} & {\scriptscriptstyle \blacklozenge} &
   \cdot & \text{u} & \cdot & \cdot & \cdot & \cdot &
   \cdot & \cdot & \cdot & \cdot & \cdot & \cdot &
   {\scriptscriptstyle\infty} & \cdot & \text{u} & {\scriptscriptstyle\infty} & \cdot &
   {\scriptscriptstyle\infty} & \cdot & \cdot & \cdot & \cdot \\
 \cdot & \cdot & \cdot & \cdot & \cdot & \cdot &
   \cdot & {\scriptscriptstyle \blacklozenge} & \cdot & \cdot & \cdot & \cdot
   & \cdot & \cdot & \cdot & \cdot & \cdot & \cdot
   & \cdot & 3 & 3 & \cdot & \cdot & \cdot & \cdot &
   3 & \cdot & \cdot & \cdot \\
 \text{u} & 4 & 4 & \text{u} & \text{u} & \text{u} & \text{u} &
   \cdot & {\scriptscriptstyle \blacklozenge} & \cdot & \cdot & \cdot & \cdot
   & \cdot & \cdot & \cdot & \cdot & \cdot &
   {\scriptscriptstyle\infty} & \cdot & {\scriptscriptstyle\infty} & \text{u} & \cdot &
   \cdot & \cdot & \cdot & {\scriptscriptstyle\infty} & \cdot &
   \cdot \\
 \cdot & \cdot & \cdot & \cdot & {\scriptscriptstyle\infty} & \cdot
   & \cdot & \cdot & \cdot & {\scriptscriptstyle \blacklozenge} & \cdot &
   \cdot & \cdot & \cdot & 3 & \cdot & \cdot &
   \cdot & \cdot & \cdot & \cdot & \cdot & \cdot &
   \cdot & \cdot & \cdot & \cdot & \cdot & 3 \\
 \cdot & \cdot & \cdot & {\scriptscriptstyle\infty} & \cdot & \cdot
   & \cdot & \cdot & \cdot & \cdot & {\scriptscriptstyle \blacklozenge} &
   \cdot & \cdot & \cdot & \cdot & \cdot & 3 & 3 &
   \cdot & \cdot & \cdot & \cdot & \cdot & \cdot &
   \cdot & \cdot & \cdot & \cdot & \cdot \\
 \cdot & \cdot & \cdot & \cdot & \cdot & \cdot &
   \cdot & \cdot & \cdot & \cdot & \cdot & {\scriptscriptstyle \blacklozenge}
   & 3 & \cdot & \cdot & \cdot & \cdot & \cdot & 3 &
   \cdot & \cdot & \cdot & \cdot & \cdot & 3 &
   \cdot & \cdot & \cdot & \cdot \\
 \cdot & \cdot & \cdot & \cdot & \cdot & {\scriptscriptstyle\infty}
   & \cdot & \cdot & \cdot & \cdot & \cdot & 3 &
   {\scriptscriptstyle \blacklozenge} & \cdot & \cdot & \cdot & \cdot & 3 &
   \cdot & \cdot & \cdot & \cdot & \cdot & \cdot &
   \cdot & \cdot & \cdot & \cdot & \cdot \\
 \text{u} & \cdot & \cdot & \cdot & \cdot & \text{u} &
   \cdot & \cdot & \cdot & \cdot & \cdot & \cdot &
   \cdot & {\scriptscriptstyle \blacklozenge} & \cdot & 3 & \cdot & \cdot &
   \cdot & \cdot & \cdot & 3 & 4 & \cdot & \cdot &
   \cdot & 4 & \cdot & \cdot \\
 \cdot & \cdot & \cdot & \cdot & \cdot & \cdot &
   \cdot & \cdot & \cdot & 3 & \cdot & \cdot &
   \cdot & \cdot & {\scriptscriptstyle \blacklozenge} & \cdot & \cdot & \cdot
   & \cdot & \cdot & \cdot & \cdot & \cdot & \cdot
   & 3 & \cdot & 3 & \cdot & \cdot \\
 \cdot & \cdot & \cdot & \text{u} & \text{u} & \cdot &
   \cdot & \cdot & \cdot & \cdot & \cdot & \cdot &
   \cdot & 3 & \cdot & {\scriptscriptstyle \blacklozenge} & \cdot & \cdot & 4 &
   4 & \cdot & 3 & \cdot & \cdot & \cdot & \cdot &
   \cdot & \cdot & \cdot \\
 \cdot & \cdot & \cdot & \cdot & \cdot & \cdot &
   \cdot & \cdot & \cdot & \cdot & 3 & \cdot &
   \cdot & \cdot & \cdot & \cdot & {\scriptscriptstyle \blacklozenge} & \cdot
   & \cdot & \cdot & 3 & \cdot & 3 & \cdot & \cdot &
   \cdot & \cdot & \cdot & \cdot \\
 \cdot & \cdot & \cdot & \cdot & \cdot & \cdot &
   \cdot & \cdot & \cdot & \cdot & 3 & \cdot & 3 &
   \cdot & \cdot & \cdot & \cdot & {\scriptscriptstyle \blacklozenge} & \cdot
   & \cdot & \cdot & \cdot & \cdot & 3 & \cdot &
   \cdot & \cdot & \cdot & \cdot \\
 {\scriptscriptstyle\infty} & 4 & \cdot & \cdot & \cdot & \cdot &
   \cdot & \cdot & {\scriptscriptstyle\infty} & \cdot & \cdot & 3 &
   \cdot & \cdot & \cdot & 4 & \cdot & \cdot &
   {\scriptscriptstyle \blacklozenge} & \cdot & \cdot & \cdot & \cdot & \cdot
   & \cdot & \cdot & \cdot & \cdot & \cdot \\
 \cdot & \cdot & 4 & \cdot & \cdot & {\scriptscriptstyle\infty} &
   {\scriptscriptstyle\infty} & 3 & \cdot & \cdot & \cdot & \cdot &
   \cdot & \cdot & \cdot & 4 & \cdot & \cdot &
   \cdot & {\scriptscriptstyle \blacklozenge} & \cdot & \cdot & \cdot & \cdot
   & \cdot & \cdot & \cdot & \cdot & \cdot \\
 \cdot & \cdot & \cdot & \cdot & \cdot & \cdot &
   \cdot & 3 & {\scriptscriptstyle\infty} & \cdot & \cdot & \cdot &
   \cdot & \cdot & \cdot & \cdot & 3 & \cdot &
   \cdot & \cdot & {\scriptscriptstyle \blacklozenge} & \cdot & \cdot & \cdot
   & \cdot & \cdot & \cdot & \cdot & \cdot \\
 \cdot & \cdot & \cdot & \cdot & \cdot & \cdot &
   \text{u} & \cdot & \text{u} & \cdot & \cdot & \cdot &
   \cdot & 3 & \cdot & 3 & \cdot & \cdot & \cdot &
   \cdot & \cdot & {\scriptscriptstyle \blacklozenge} & \cdot & 4 & \cdot &
   \cdot & \cdot & 4 & \cdot \\
 \cdot & 4 & \cdot & \cdot & {\scriptscriptstyle\infty} & \cdot &
   {\scriptscriptstyle\infty} & \cdot & \cdot & \cdot & \cdot &
   \cdot & \cdot & 4 & \cdot & \cdot & 3 & \cdot &
   \cdot & \cdot & \cdot & \cdot & {\scriptscriptstyle \blacklozenge} & \cdot
   & \cdot & \cdot & \cdot & \cdot & \cdot \\
 {\scriptscriptstyle\infty} & \cdot & 4 & \cdot & {\scriptscriptstyle\infty} & \cdot &
   \cdot & \cdot & \cdot & \cdot & \cdot & \cdot &
   \cdot & \cdot & \cdot & \cdot & \cdot & 3 &
   \cdot & \cdot & \cdot & 4 & \cdot & {\scriptscriptstyle \blacklozenge} &
   \cdot & \cdot & \cdot & \cdot & \cdot \\
 \cdot & \cdot & \cdot & \cdot & \cdot & \cdot &
   {\scriptscriptstyle\infty} & \cdot & \cdot & \cdot & \cdot & 3 &
   \cdot & \cdot & 3 & \cdot & \cdot & \cdot &
   \cdot & \cdot & \cdot & \cdot & \cdot & \cdot &
   {\scriptscriptstyle \blacklozenge} & \cdot & \cdot & \cdot & \cdot \\
 {\scriptscriptstyle\infty} & \cdot & \cdot & \cdot & \cdot & \cdot
   & \cdot & 3 & \cdot & \cdot & \cdot & \cdot &
   \cdot & \cdot & \cdot & \cdot & \cdot & \cdot &
   \cdot & \cdot & \cdot & \cdot & \cdot & \cdot &
   \cdot & {\scriptscriptstyle \blacklozenge} & \cdot & \cdot & 3 \\
 \cdot & \cdot & 4 & {\scriptscriptstyle\infty} & \cdot & \cdot &
   \cdot & \cdot & {\scriptscriptstyle\infty} & \cdot & \cdot &
   \cdot & \cdot & 4 & 3 & \cdot & \cdot & \cdot &
   \cdot & \cdot & \cdot & \cdot & \cdot & \cdot &
   \cdot & \cdot & {\scriptscriptstyle \blacklozenge} & \cdot & \cdot \\
 \cdot & 4 & \cdot & {\scriptscriptstyle\infty} & \cdot & {\scriptscriptstyle\infty} &
   \cdot & \cdot & \cdot & \cdot & \cdot & \cdot &
   \cdot & \cdot & \cdot & \cdot & \cdot & \cdot &
   \cdot & \cdot & \cdot & 4 & \cdot & \cdot &
   \cdot & \cdot & \cdot & {\scriptscriptstyle \blacklozenge} & 3 \\
 \cdot & \cdot & \cdot & \cdot & \cdot & \cdot &
   \cdot & \cdot & \cdot & 3 & \cdot & \cdot &
   \cdot & \cdot & \cdot & \cdot & \cdot & \cdot &
   \cdot & \cdot & \cdot & \cdot & \cdot & \cdot &
   \cdot & 3 & \cdot & 3 & {\scriptscriptstyle \blacklozenge} \\
\end{array}
\right)
$$
\end{footnotesize}
\caption{The polyhedron $P_{15}$ described by its adjacency matrix: each off--diagonal $(i,j)$--entry represents either the dihedral angle at which facet $i$ intersects facet $j$ (marked with ``$3$'' for a $\pi/3$ angle, ``$4$'' for a $\pi/4$ angle, and marked with a dot for a right angle), or a pair of parallel (marked with ``$\infty$'') or ultraparalell (marked with ``$\text{u}$'') facets.}\label{fig:P15-matrix}
\end{figure}

The polyhedron $P_{15}$ has $29$ facets and generates an arithmetic reflection group, as follows from Vinberg's arithmeticity criterion (Theorem~\ref{V}). Indeed, conditions (1) and (2) are vacuously satisfied. The only condition that needs to be checked is (3). As could be easily observed, if $G = (g_{ij})^{29}_{i,j=1}$ is the Gram matrix of $P_{15}$, then each $g_{ij}$ satisfies $4\,g^2_{ij} \in \mathbb{Z}$, and thus the elements of $\mathrm{Cyc}(2G)$ are either all integers, or at least one of them is a quadratic irrationality. In order to exclude the latter, it is enough to check that $2^k g_{i_1 i_2} \ldots g_{i_{k-1} i_{k}} g_{i_k i_1} \in \mathbb{Z}$, for $i_1 i_2 \ldots i_k i_1$ being edge cycles in the Coxeter diagram of $P_{15}$ belonging to a $\mathbb{Z}_2$--cycle basis of the diagram (viewed as a simple undirected graph). Indeed, if a cycle product $p$ corresponds to a cycle $c$ that is $\mathbb{Z}_2$--sum of $c_1$ and $c_2$ (with the corresponding cycle products $p_1$ and $p_2$) then $p_1 \cdot p_2 = p \cdot q^2$, where $q$ corresponds to the overlap of $c_1$ and $c_2$ (i.e. $q$ is a trivial back--and--forth cycle unless $c_1 = c_2$ as sets, which is a trivial situation). As mentioned above $q^2 \in \mathbb{Z}$, and thus $p$ cannot be a quadratic irrationality if $p_1, p_2 \in \mathbb{Z}$. This argument easily generalizes to an arbitrary $\mathbb{Z}_2$--sum of cycles.

Let us observe that there are no suitable Coxeter faces of $P_{21}$ in $\HH^{n+1}$ for $n = 15$ and $16$ in order to apply Lemma~\ref{lemma:doubling}.

The remaining packings in $\SSS^n$, $2 \leqslant n \leqslant 13$, can be easily obtained from Vinberg's examples \cite{Vin72} for the series of quadratic forms~$f_n = -2 x^2_0 + x^2_1 + \ldots + x^2_n$, for $3 \leqslant n \leqslant 14$, respectively. Each of the polyhedra in \cite[Table 7]{Vin72} has a facet that is orthogonal to its neighbours\footnote{For packings in $\mathbb{S}^n$, $2 \leqslant n \leqslant 7$, one can also use the totally right--angled polyhedra from \cite{PV}.}. 

All the computations mentioned above (faces of Borcherds' $P_{21}$ and their properties of being Coxeter and/or having ``even'' angles) are accessible on Github \cite{github1} as a SageMath \cite{sagemath} worksheet. 

\subsection{Proof of Theorem~\ref{theorem:arithmetic-super}}\label{sec:C} In \cite{SW}, Scharlau and Walhorn provide the list of square--free isotropic reflective Lorentzian lattices\footnote{A \emph{Lorentzian lattice} is a free $\Z$-module equipped with a bilinear product of signature $(n+1, 1)$. This should not be confused with lattices in the sense of discrete subgroups of hyperbolic isometries.} of signature $(n+1, 1)$ for $n = 2, 3$. In \cite{Turkalj}, Turkalj performs an analogous classification for $n = 4$. They correspond (up to commensurability) to maximal arithmetic non--uniform reflective lattices acting on $\HH^{n+1}$, for $n = 2, 3, 4$.

It is worth mentioning that the list of lattices acting on $\mathbb{H}^4$ has a few entries where the $\mathbb{H}$ direct summand apparently has to be replaced by ${}^2 \mathbb{H}$; see our Git repository \cite{github1} for details. Such entries are easily identifiable by running the Vinberg algorithm (the number of roots indicated in the respective tables seems to be correct). The lattices described by Turkalj require more work for the information to be extracted from the list provided in \cite{Turkalj}. We used instead an equivalent list provided by Kirschmer and Scharlau \cite{KS}. 

In each case we reduce each list up to rational isometry between the respective lattices. Then the proof amounts to a lengthy computation carried out by using the Julia \cite{julia} implementation of Vinberg's algorithm due to Bottinelli \cite{Bot}. 

For each lattice, we consider its reflection subgroup: if the corresponding polyhedron has an orthogonal facet (or, in terms of Vinberg's work \cite{Vin15}, the lattice has an \textit{isolated root}), then we are done. Otherwise, we choose another lattice, up to an appropriate rational isometry, and use it instead: we get a commensurable lattice having an isolated root. The necessary rational isometry is usually guessed by trial and error: reflectivity is checked by using the Vinberg algorithm. 

All the data necessary to verify our computation \cite{github1} and Bottinelli's implementation of the Vinberg algorithm \cite{github2} are available on Github.

\subsection{A remark on reflectivity}\label{sec:remark-refl}

Here we consider another question in the context of properly integral  packings. 

In the proof of the Arithmeticity Theorem (see \cite[Theorem 19]{KN}), given an integral orbit of balls $\mathcal O = B_0 \cdot \Gamma$, and a choice of $n+2$ linearly independent normals to spheres bounding the balls in the orbit,  one constructs a Lorentzian lattice $L$ and its corresponding arithmetic lattice $O_L(\Z)$, so that $O_L(\Z)$ contains a conjugate of $\Gamma$  as a subgroup; therefore $\Gamma$ is subarithmetic, in the terminology of \cite[p. 3]{K^2}. 

\begin{figure}[h]
    \centering
 \begin{tikzpicture}[scale=1.5]
    \coordinate (one) at (0, 0);
    \coordinate (two) at (1,1);
    \coordinate (three) at (1,-1);
    \coordinate (four) at (2,0);
    \coordinate (five) at (3,0);
    \coordinate (midpt) at (4,0);
    \coordinate (six) at (5,0);

    \draw[dashed] (five) -- (six) ;
    \draw[thick] (five) -- (four);
    \draw[thick, double distance = 2pt] (four) -- (three);
    \draw[thick, double distance = 2pt] (one) -- (two);
    \draw[thick] (one) -- (three);
    \draw[thick] (two) -- (four);
    
    \filldraw (midpt) circle (0pt) node[above] { $\sqrt{\nicefrac{8\, }7}$};
    \filldraw (one) circle (2pt) node[left] {\scriptsize 1};
    \filldraw (two) circle (2pt) node[above] {\scriptsize 2};
    \filldraw (three) circle (2pt) node[below] {\scriptsize 3} ;
    \filldraw (four) circle (2pt) node[above] {\scriptsize 4};
    \filldraw (five) circle (2pt) node[above] {\scriptsize 5};
    \filldraw (six) circle (2pt) node[above] {\scriptsize 6};
    
  \end{tikzpicture}
    \caption{The Coxeter--Vinberg diagram of the prism $P$.}
    \label{fig:P}
\end{figure}
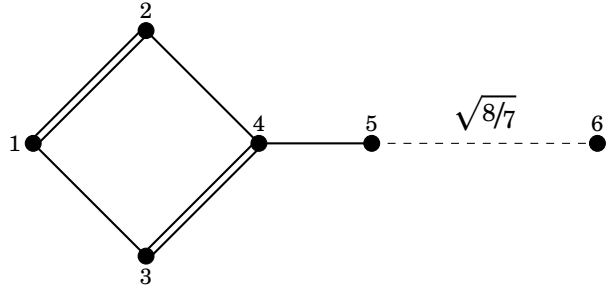

One may wonder, since $\Gamma$ is generated by reflections, whether the Lorentzian lattice $L$ should itself be reflective. Here we record an example which answers this question in the negative.

Let $P$ be the $4$--dimensional non--compact hyperbolic prism described by the  Coxeter--Vinberg diagram in Figure \ref{fig:P}.
The facet normals of $P$ are
\begin{equation*}
    \begin{aligned}[t]
    &e_1 = \left(-\frac{\sqrt{2}}{8}, \sqrt{2}, 0, \frac{1}{2}, \frac{\sqrt{2}}{2}\right),\\
    &\\
    &e_3 = (0, 0, 0, -1, 0),\\
    &\\
    &e_5 = (0, 0, -1, 0, 0),\\
    \end{aligned}
    \qquad
    \begin{aligned}[t]
    &e_2 = (0, 0, 0, 0, -1),\\
    &\\
    &e_4 = \left(1, 0, \frac{1}{2}, \frac{\sqrt{2}}{2}, \frac{1}{2}\right),\\
    &e_6=\left(\frac{\sqrt{14}}{28}, \frac{2 \sqrt{14}}{7}, \frac{2 \sqrt{14}}{7}, 0, 0\right),
    \end{aligned}
\end{equation*}
where the isolated vector is $e_6$ and the scalar product is given by the matrix
$$
\begin{pmatrix}
    0 & -\frac{1}{2} & 0 & 0 & 0 \\
    -\frac{1}{2} & 0 & 0 & 0 & 0 \\
    0 & 0 & -1 & 0 & 0 \\
    0 & 0 & 0 & -1 & 0 \\
    0 & 0 & 0 & 0 & -1
\end{pmatrix}.
$$
By applying Vinberg's criterion (Theorem~\ref{V}) to the Gram matrix of $P$
\begin{equation*}
    G(P) = 
    \left(
\begin{array}{cccccc}
 -1 & \frac{1}{\sqrt{2}} & \frac{1}{2} & 0 & 0 & 0 \\
 \frac{1}{\sqrt{2}} & -1 & 0 & \frac{1}{2} & 0 & 0 \\
 \frac{1}{2} & 0 & -1 & \frac{1}{\sqrt{2}} & 0 & 0 \\
 0 & \frac{1}{2} & \frac{1}{\sqrt{2}} & -1 & \frac{1}{2} & 0 \\
 0 & 0 & 0 & \frac{1}{2} & -1 &  \sqrt{\frac{8}{7}} \\
 0 & 0 & 0 & 0 &  \sqrt{\frac{8}{7}} & -1 \\
\end{array}
\right)
\end{equation*}
we can conclude that $P$ generates a quasi--arithmetic but not arithmetic reflection group. Hence the sphere packing corresponding to $e_6$ cannot be superintegral.

\begin{figure}
    \centering
    \includegraphics[width=.6\textwidth]{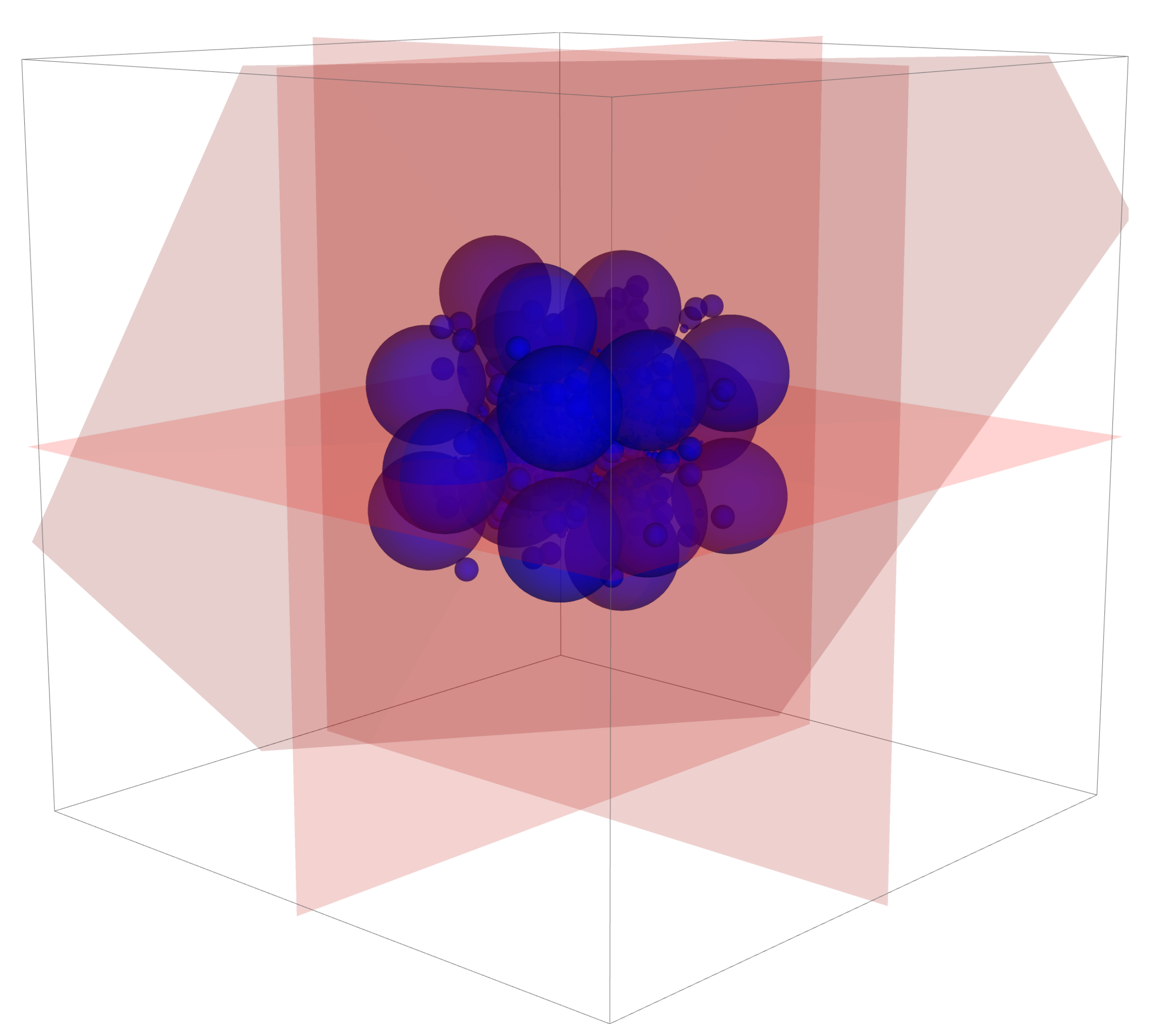}
    \caption{Some of the balls of the packing $\mathscr{P}$ shown in blue, and the walls of the generators $e_1,\dots,e_5$ of $\Gamma$ shown in red.}
    \label{fig:pack}
\end{figure}

The orbit of the sphere normal to $e_6$ under the group $\Gamma$ generated by reflections through the normals $e_1,\dots,e_5$ is a crystallographic  packing (see Figure \ref{fig:pack}), which we claim is properly integral.
To see this, we choose for our list of $n+2=5$ linearly independent vectors the following: 
$$V = \left( e_6, e_6 R_5, e_6 R_5 R_4, e_6 R_5 R_4 R_2, e_6 R_5 R_4 R_3 \right),$$ 
as described in \cite[Equation (2)]{KN}.
These are all in the packing $\mathscr{P} = B_{ e_6}\cdot\Gamma$.
After rescaling by $\sqrt {8/7}$, these balls all have bend $1$. 

Then the generators $R_1,\dots,R_5$, conjugated by $V$, become the matrices:
$$
VR_1V^{-1}=
\left(
\begin{array}{ccccc}
 1 & 0 & 0 & 0 & 0 \\
 0 & 1 & 0 & 0 & 0 \\
 0 & 0 & 1 & 0 & 0 \\
 9/4 & 9/4 & 3/2 & -1/4 & \
-3/4 \\
 9/4 & 9/4 & 3/2 & -5/4 & 1/4 \
\\
\end{array}
\right),
\
VR_2V^{-1}=
\left(
\begin{array}{ccccc}
 1 & 0 & 0 & 0 & 0 \\
 0 & 1 & 0 & 0 & 0 \\
 0 & 0 & 0 & 1 & 0 \\
 0 & 0 & 1 & 0 & 0 \\
 0 & 0 & -1 & 1 & 1 \\
\end{array}
\right),
$$
$$
VR_3V^{-1}=
\left(
\begin{array}{ccccc}
 1 & 0 & 0 & 0 & 0 \\
 0 & 1 & 0 & 0 & 0 \\
 0 & 0 & 0 & 0 & 1 \\
 0 & 0 & -1 & 1 & 1 \\
 0 & 0 & 1 & 0 & 0 \\
\end{array}
\right),\ 
VR_4V^{-1}=\left(
\begin{array}{ccccc}
 1 & 0 & 0 & 0 & 0 \\
 0 & 0 & 1 & 0 & 0 \\
 0 & 1 & 0 & 0 & 0 \\
 0 & 0 & 0 & 1 & 0 \\
 0 & -1 & 1 & 0 & 1 \\
\end{array}
\right),$$
and
$$
VR_5V^{-1}= \left(
\begin{array}{ccccc}
 0 & 1 & 0 & 0 & 0 \\
 1 & 0 & 0 & 0 & 0 \\
 0 & 0 & 1 & 0 & 0 \\
 0 & 0 & 0 & 1 & 0 \\
 0 & 0 & 0 & 0 & 1 \\
\end{array}
\right).
$$
These act on the {\it left} on the set of bends of the packing; the bends which arise in the packing are those in the action of the above matrices on the initial bends vector $(1,1,1,1,1)$ of all bends equal to $1$.
All but the first matrix are themselves integral, and hence evidently preserve integrality. Moreover, inspection shows that they preserve the congruence $b_1+b_2+2b_3+3b_4+b_5\equiv0\pmod 4$, which is satisfied by the initial bends vector $(1,1,1,1,1)$.
This congruence ensures that the action of the first matrix $VR_1 V^{-1}$ always gives integer bends in the orbit, thus proving the proper integrality of the packing.

Finally, following the proof of the Arithmeticity Theorem, we exhibit
a Lorentzian lattice $L$ so that
the arithmetic lattice
$O_L(\Z)$
contains a conjugate of $\Gamma$. We find in this case that $L$ is {\it not} reflective.
To begin, we record that the Gram matrix of $V$:
$$
G = G(V) = VQV^T=-\frac{1}{7}
\left(
\begin{array}{ccccc}
 -7 & 9 & 9 & 9 & 9 \\
 9 & -7 & 9 & 9 & 9 \\
 9 & 9 & -7 & 9 & 25 \\
 9 & 9 & 9 & -7 & 41 \\
 9 & 9 & 25 & 41 & -7 \\
\end{array}
\right),
$$
where $Q = \mathbb H \perp \langle 1,1,1\rangle$.
The inverse Gram matrix equals
$$
G^{-1}=
-\frac{7}{256}
\left(
\begin{array}{ccccc}
 25 & 9 & 0 & -9 & -9 \\
 9 & 25 & 0 & -9 & -9 \\
 0 & 0 & 12 & -8 & -4 \\
 -9 & -9 & -8 & 9 & 1 \\
 -9 & -9 & -4 & 1 & 5 \\
\end{array}
\right),
$$
and gives rise to the lattice $L$ having Lorentzian quadratic form $f(x) = x^T G^{-1} x$.
The proof of the Arithmeticity Theorem \cite{KN} shows that $V\Gamma V^{-1} <  \OO_L(\Z).$
Finally, Vinberg's algorithm applied to $L$ produces the following first $31$ roots:

\begin{align*}
    &e_1 = (1, -1, 0, 0, 0),\ e_2 = (0, 1, 2, 1, 0),\  e_3 = (0, 1, 0, 1, 0), \\
    &e_4 = (-1, 0, -1, -2, 1),\  e_5 = (0, 1, 2, 3, 0),\ e_6 = (2, 4, 9, 15, -1), \\
    &e_7 = (1, 2, 8, 11, 0),\ e_8 = (1, 3, 6, 12, 0),\ e_9 = (5, 6, 20, 29, -2), \\
    &e_{10} = (8, 8, 21, 35, -3),\ e_{11} = (7, 10, 23, 36, -3),\ e_{12} = (4, 4, 15, 25, -1), \\
    &e_{13} = (6, 7, 16, 31, -2),\ e_{14} = (3, 3, 7, 13, -1),\ e_{15} = (5, 6, 13, 28, -1), \\
    &e_{16} = (5, 6, 18, 29, -2),\ e_{17} = (8, 8, 23, 35, -3),\ e_{18} = (2, 3, 11, 20, 1), \\
    &e_{19} = (4, 4, 17, 25, -1),\ e_{20} = (1, 1, 2, 5, 0),\ e_{21} = (3, 6, 19, 26, -1), \\
    &e_{22} = (3, 6, 15, 26, -1),\ e_{23} = (4, 4, 9, 23, 1),\ e_{24} = (3, 3, 14, 22, 0), \\
    &e_{25} = (3, 3, 11, 17, -1),\ e_{26} = (3, 4, 12, 23, 0),\ e_{27} = (2, 5, 8, 21, 2), \\
    &e_{28} = (3, 3, 7, 12, -1),\ e_{29} = (11, 12, 34, 59, -4),\ e_{30} = (10, 11, 32, 49, -4), \\
    &e_{31} = (8, 11, 35, 54, -3),\; \ldots
\end{align*}

There is an infinite order symmetry $\sigma$ mapping the elliptic subdiagram spanned by the roots $\{ e_{13}, e_3, e_{14}, e_1, e_6 \}$ to another elliptic subdiagram spanned by the roots $\{ e_{12}, e_{19}, e_{25}, e_{31}, e_7 \}$, exactly in this order 
\begin{equation*}
    e_{13} \mapsto e_{12}, e_3 \mapsto e_{19}, e_{14} \mapsto e_{25}, e_1 \mapsto e_{31}, e_6 \mapsto e_7.
\end{equation*}

The matrix from $\OO_L(\mathbb{Z})$ representing $\sigma$ in the respective bases is 
\begin{equation*}
  S =  \left(
\begin{array}{ccccc}
54 & 41 & -3 & -10 & 79 \\
64 & 48 & -3 & -12 & 96 \\
159 & 122 & -8 & -30 & 238 \\
276 & 210 & -15 & -51 & 413 \\
-21 & -16 & 1 & 4 & -31
\end{array}
\right).
\end{equation*}
Thus the fundamental polyhedron of the maximal reflection subgroup of the  arithmetic group $\OO_L(\mathbb{Z})$ containing $\Gamma$ has an infinite order symmetry; hence $L$ is \textit{not} reflective.

However, we would like to remark that the group $\OO_L(\mathbb{Z})$ is defined up to commensurability (because of the choice of normals described above), and reflectivity is \textit{not} commensurability invariant in the setting of Lorentzian lattices. Indeed, the lattice
\begin{equation*}
    \left(
\begin{array}{ccccc}
0 & 0 & 49 \\ 
0 & 49 & 7 \\ 
49 & 7 & 3
\end{array}
\right)
\end{equation*}
has \textit{no} roots \cite{BK22, Col}, while it is easily seen to be commensurable to the diagonal lattice
\begin{equation*}
    \left(
\begin{array}{ccccc}
-1 & 0 & 0 \\ 
0  & 2 & 0 \\ 
0  & 0 & 1
\end{array}
\right),
\end{equation*}
that is reflective and corresponds to the $(2,4,\infty)$ hyperbolic triangle reflection group.

\end{document}